\documentclass[11pt,a4paper]{amsart}
\title[Tournament limits: Degree distributions, score functions and self--converseness]{Tournament limits: degree distributions, score functions and self--converseness}
\author[Erik ~Th\"ornblad]{Erik Th\"ornblad}
 \address{Department of Mathematics, Uppsala University, Box 480, S-75106 Uppsala, Sweden.}
 \email{erik.thornblad@math.uu.se}
 \date{\today}

\usepackage{amsfonts,amsmath,amssymb,amsthm}
\usepackage{graphics,graphicx}
\usepackage{enumerate}
\usepackage[utf8]{inputenc}
\usepackage{bbm, dsfont}

\usepackage{mathtools}
\mathtoolsset{showonlyrefs}

\numberwithin{equation}{section}

\DeclareSymbolFont{SY}{U}{psy}{m}{n}
\DeclareMathSymbol{\emptyset}{\mathord}{SY}{'306}

\marginparwidth 20mm \addtolength{\textheight}{10mm}
\addtolength{\textwidth}{20mm} \addtolength{\topmargin}{-10mm}

\oddsidemargin 10mm \evensidemargin 10mm

\newtheorem{theorem}{Theorem}[section]{\bf}{\it}
\newtheorem{proposition}[theorem]{Proposition}{\bf}{\it}
\newtheorem{lemma}[theorem]{Lemma}{\bf}{\it}
\newtheorem{corollary}[theorem]{Corollary}{\bf}{\it}
\newtheorem{conjecture}[theorem]{Conjecture}{\bf}{\it}

\DeclareMathOperator{\Leb}{Leb}
\newcommand{\E}{\mathbb{E}}
\newcommand{\pP}{\mathbb{P}}
\newcommand{\cT}{\mathcal{T}}
\newcommand{\cP}{\mathcal{P}}
\newcommand{\cS}{\mathcal{S}}
\newcommand{\sS}{\mathsf{S}}
\newcommand{\sC}{\mathsf{C}}
\newcommand{\sT}{\mathsf{T}}
\newcommand{\cD}{\mathcal{T}}
\renewcommand*\d{\mathop{}\!\mathrm{d}}

\theoremstyle{definition}  
\newtheorem{definition}[theorem]{Definition}{\bf}{\it}

\theoremstyle{remark}
\newtheorem{remark}[theorem]{Remark}{\bf}
\newtheorem*{question*}{Question}{\bf}{\it}

\begin{document}

\begin{abstract}
Motivated by known results for finite tournaments, we define and study the score functions of tournament kernels and the degree distributions of tournament limits. Our main theorem completely characterises those distributions that appear as the degree distribution of some tournament limit and those functions that appear as the score function of some tournament kernel. We also show that only the uniform distribution can be realised as the outdegree distribution of a unique tournament limit. Finally we define self-converse tournament limits and kernels and characterise their degree distributions and score functions.
\end{abstract}

\maketitle

%
%
%

\section{Introduction and main results}

The now established theory of graph limits has a natural translation to tournament limits, see \cite{DiaconisJanson, Thornblad2016b}. In this framework, sequences of tournaments (directed complete graphs) are said to be convergent if all (di--)subgraph homomorphism densities converge. The limit objects arise through an embedding of the set $\cT$ of unlabelled tournaments into a compact metric space $\overline{\cT}$. The boundary set is the set of \emph{tournament limits}. The details of this construction are given in Section \ref{sec:prelim}; see also \cite{DiaconisJanson, Thornblad2016b}. Tournament limits are typically denoted $\Gamma$, and each tournament limit can be represented by a \emph{tournament kernel}, which is a measurable function $W:[0,1]^2 \to [0,1]$ satisfying $W(x,y)+W(y,x)=1$ almost everywhere. 

A \emph{generalised tournament} $G=(V(G),\alpha)$ consists of a set $V(G)=\{1,2,\dots, n\}$ of vertices and a function $\alpha: V(G)\times V(G)\to [0,1]$ such that $\alpha(u,v)+\alpha(v,u)=1$ for all distinct $u,v\in V(G)$ and $\alpha(u,u)=0$ for all $u\in V(G)$. If $\alpha$ only takes values in $\{0,1\}$, then $G$ is called a \emph{tournament}. If $G$ is a tournament, we will call the set of $(u,v)\in V(G)\times V(G)$ such that $\alpha(u,v)=1$ the \emph{edge set} of $G$. The cardinality of $V(G)$ is denoted $v(G)$. Given a vertex $u\in V(G)$, its \emph{outdegree} is defined as $\sum_{v\in V(G)}\alpha(u,v)$ and its \emph{indegree} as $\sum_{v\in V(G)}\alpha(v,u)$. The sequence 
$(d_i)_{i=1}^n$ of outdegrees of the vertices of $G$, listed in some order, is called the \emph{score sequence} of $G$.

Our aim is to translate the notions of score sequences and outdegrees to the setting of tournament limits and kernels, developing a sensible theory in the process. The following classical result can be seen as our motivating result, and we will later see that there is a corresponding result for tournament limits and tournament kernels.

\begin{theorem}[\cite{Landau1953, Moon63}] \label{thm:landaumoon}
A sequence $(d_i)_{i=1}^n$ of non--negative integers (respectively reals) is the score sequence of some tournament (respectively generalised tournament) on $n$ vertices if and only if $\sum_{i\in J} d_i\geq {|J| \choose 2}$ for all subsets $J\subseteq \{1,2,\dots , n\}$, with equality for $J=\{1,2,\dots, n\}$.
\end{theorem}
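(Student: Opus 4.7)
The plan is to prove necessity by a short double counting argument and sufficiency by induction on $n$. For necessity, fix $J \subseteq \{1,\dots,n\}$ and decompose
\[
\sum_{i \in J} d_i = \sum_{i \in J}\sum_{v=1}^n \alpha(i,v) = \sum_{\{i,j\}\subseteq J}\bigl(\alpha(i,j) + \alpha(j,i)\bigr) + \sum_{i \in J}\sum_{v \notin J}\alpha(i,v).
\]
The first sum on the right equals $\binom{|J|}{2}$ by the antisymmetry axiom $\alpha(u,v)+\alpha(v,u)=1$, while the second is non-negative and vanishes exactly when $J = \{1,\dots,n\}$; this yields $\sum_{i \in J} d_i \geq \binom{|J|}{2}$ with equality in the full case.

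For sufficiency in the generalised case, a real-valued sequence $(d_i)$ satisfying the inequalities defines a linear programming feasibility problem for $\alpha$: find $\alpha(i,j) \in [0,1]$ for $i<j$ with the row-sum constraints $\sum_{j \neq i}\alpha(i,j) = d_i$, where $\alpha(j,i) := 1 - \alpha(i,j)$ for $j<i$. Any infeasibility certificate from LP duality, or equivalently a Hall/Gale--Ryser style separation argument, reduces exactly to the existence of some $J$ with $\sum_{i \in J} d_i < \binom{|J|}{2}$, which the hypothesis excludes; hence a feasible $\alpha$ exists. A more direct waterfilling construction on the sorted sequence, distributing the slacks $\sum_{i \in J} d_i - \binom{|J|}{2}$, would yield the same conclusion constructively.

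For sufficiency in the tournament case (integer $d_i$), I would induct on $n$. If some proper non-empty $J$ realises equality $\sum_{i \in J} d_i = \binom{|J|}{2}$, then any tournament with score sequence $(d_i)$ is forced (by the necessity computation) to orient every edge between $J$ and its complement from $V \setminus J$ into $J$; the restricted sequences $(d_i)_{i \in J}$ and $(d_i - |J|)_{i \in V \setminus J}$ then both satisfy Landau's inequalities, as one checks via the identity $\binom{|K|+|J|}{2} = \binom{|K|}{2} + \binom{|J|}{2} + |K||J|$, and the inductive hypothesis produces tournaments on the two pieces that glue together along the forced between-orientation. Otherwise every proper inequality is strict, and by integrality strict by at least $1$, so one may transfer a unit of score from the largest entry to the smallest while preserving the hypotheses, eventually producing an equality case handled above. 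The main obstacle is this last step: verifying that the score-transfer move never breaks Landau's condition for any $J$ and that iterating it terminates. A careful invariant (e.g.\ monotonicity of $\max_i d_i - \min_i d_i$) closes the induction, but this is the delicate part of Landau's original argument.
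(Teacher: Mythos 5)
The paper does not give its own proof of Theorem~\ref{thm:landaumoon}; the result is cited from Landau and Moon, and the only argument supplied is the one-line necessity sketch in the Remark that follows it. Your necessity decomposition is exactly that sketch, written out with $\alpha$ and the antisymmetry axiom. One small slip: the cross term $\sum_{i\in J}\sum_{v\notin J}\alpha(i,v)$ does not vanish \emph{exactly} when $J=\{1,\dots,n\}$ --- it also vanishes for any proper $J$ all of whose cross-edges point into $J$ --- but since you only use nonnegativity in general and the vanishing for the full set, this is harmless.

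For sufficiency the paper merely says it is ``more involved'', so there is no paper argument to compare against, but your integer-case induction has a concrete gap in the strict branch. Transferring a unit from a largest score to a smallest score does preserve Landau's inequalities (strictness plus integrality covers the subsets containing the donor but not the recipient), but it does not ``eventually produce an equality case''. The sequence $(1,1,2,2)$ has $\sum_{i\in J}d_i\geq\binom{|J|}{2}+1$ for every proper nonempty $J$, yet a unit transfer from a $2$ to a $1$ returns the same multiset, so the iteration stalls without ever creating equality; more generally the potential $\sum_i d_i^2$ stops decreasing once $\max_i d_i-\min_i d_i\leq 1$, long before equality need appear. A separate, structural problem is that even if equality were reached you would have realised the \emph{transferred} sequence, not the original one; to undo each transfer inside an already-built tournament you need a path- or $3$-cycle-reversal lemma that the proposal does not state. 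The classical inductions either delete a vertex of extremal score and adjust the residual sequence, or start from a (near-)regular base tournament --- which your argument must handle anyway, as the stalled case shows --- and reverse directed paths to reach the target; either would replace the stalled transfer step. The LP/total-unimodularity route for the generalised case is viable (after eliminating $\alpha(j,i)=1-\alpha(i,j)$ the constraint matrix is a signed incidence matrix, hence TU, and then the integer case follows directly from the real one), but the reduction of the dual infeasibility certificate to a violated Landau inequality is asserted rather than exhibited.
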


\begin{remark}
 One may assume that the sequence in Theorem \ref{thm:landaumoon} be non--decreasing; in this case it suffices to check the condition for sets of the type $J=\{1,2,\dots, k\}$. 

Necessity in Theorem \ref{thm:landaumoon} is seen as follows.  Given any subset of size $k$ of the vertices, the induced subtournament must have $\binom{k}{2}$ internal edges, contributing this much to the sum of the score of vertices in that subtournament. Sufficiency is more involved.
\end{remark}

\subsection{Degree distributions}
Given a tournament $G$, denote by $\nu^+(G)$ the (normalised) \emph{outdegree distribution}; i.e. the distribution of the random variable $D/v(G)$ where $D$ is the outdegree of a vertex chosen uniformly at random. Analogously let $\nu^-(G)$ be the normalised indegree distribution. Denote by $\nu(G)$ the joint distribution of $\nu^-(G)$ and $\nu^+(G)$. Since the sum of the indegree and outdegree at a given vertex is always $v(G)-1$, any one of these three distributions determine the other. 
\begin{remark}
 We will not distinguish between a distribution and the corresponding measure, viewing both as elements in the set of $\cP([0,1])$ of probability distributions on $[0,1]$, equipped with the topology of weak convergence. That is, a sequence of probability measures $\mu_n \in \cP([0,1])$ is said to converge (weakly) to a probability measure $\mu\in \cP([0,1])$ if for all continuous bounded $g:[0,1]\to \mathbb{R}$ we have $\int_{[0,1]}g(x)\d\mu_n(x)\to \int_{[0,1]}g(x)\d\mu(x)$.
\end{remark}

The theory of tournament limits is based on homomorphism densities. For this reason, a homomorphism density characterisation of the degree distribution is important. Denote by $\sS_{m,n}$ the digraph with vertex set $A_1\cup A_2\cup A_3$, where $|A_1|=m$, $|A_2|=1$ and $|A_3|=n$ and edges $(u,v)$ whenever $u\in A_i, y\in A_j$ and $i<j$. The stars operate as the ``test graphs'' for the degree distribution and provide a homomorphism density characterisation of the degree distribution. To be precise, the $(m,n)$:th moment of $\nu(G)$ equals the homomorphism density of $\sS_{m,n}$ in $G$, i.e.
\begin{align}
 \int_{[0,1]^2}x^m y^n \d \nu(G)(x,y) = t(\sS_{m,n},G).
\end{align}

\begin{definition}
Let $\Gamma$ be a tournament limit. The \emph{degree distribution}, denoted $\nu(\Gamma)$, of $\Gamma$ is the unique probability distribution on $[0,1]$ with $(m,n)$:th moment equal to $t(\sS_{m,n},\Gamma)$. The marginals $\nu^-(\Gamma)$ and $\nu^+(\Gamma)$ of $\nu(\Gamma)$ are called the \emph{indegree distribution} and \emph{outdegree distribution}, respectively. 
\end{definition}

It is not obvious that the numbers $t(\sS_{m,n},\Gamma)$ must be the moments of some probability distribution. To see this, take any sequence $(G_k)_{k=1}^{\infty}$ of tournaments converging to $\Gamma$ and recall that the numbers $t(\sS_{m,n},G_k)$ in fact are the moments of the degree distribution $\nu(G_k)$. Continuity implies  $t(\sS_{m,n},G_k)\to t(\sS_{m,n},\Gamma)$ as $k\to \infty$, which translates to converge of the moments of the distributions $\nu(G_k)$. The method of moments implies that $\nu(G_k)$ converges to some unique distribution with moments $t(\sS_{m,n},\Gamma)$.

\begin{definition}
 Let $W$ be a tournament kernel. The \emph{degree distribution}, denoted $\nu(W)$, of $W$ is the distribution of the random variable 
\begin{align}
\left(\int_{0}^1W(y,X) \d y, \int_{0}^1W(X,y)\d y \right)
\end{align}
where $X\sim U[0,1]$. The marginals $\nu^-(W)$ and $\nu^+(W)$ of $\nu(W)$ are called the \emph{indegree distribution} and \emph{outdegree distribution}, respectively. 
\end{definition}

\begin{remark}\label{rem:diag}
 The measure $\nu(W)$ is concentrated on $\{(x,y)\in [0,1]^2 \ : \ x+y=1$\}. Hence $\nu$ is determined by any one of its two marginals $\nu^-$ or $\nu^+$. For this reason we will state and prove several results only for $\nu^+$.
\end{remark}

Each tournament kernel represents some tournament limit, so it is desirable that the definitions of the degree distributions agree, in the following sense.

\begin{lemma}\label{thm:distsense}
 If $W$ is a tournament kernel representing some tournament limit $\Gamma$, then $\nu(\Gamma)=\nu(W)$.
\end{lemma}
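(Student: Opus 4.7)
The plan is to verify that the two measures $\nu(\Gamma)$ and $\nu(W)$ on $[0,1]^2$ have the same moments, and then invoke the method of moments. Both measures are supported on the compact set $[0,1]^2$, so equality of all bivariate moments forces equality of the measures.

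First, I would compute the $(m,n)$-th moment of $\nu(W)$ directly from its definition. By the definition of $\nu(W)$ as the pushforward under $X\sim U[0,1]$ of the map $x\mapsto (\int_0^1 W(y,x)\,dy,\, \int_0^1 W(x,y)\,dy)$, the $(m,n)$-th moment is
\begin{align}
 \int_{[0,1]^2} x^m y^n \d\nu(W)(x,y) = \int_0^1 \left(\int_0^1 W(y,x)\d y\right)^m \left(\int_0^1 W(x,z)\d z\right)^n \d x.
\end{align}
Expanding the $m$-th and $n$-th powers and applying Fubini (justified since $W$ is bounded and all measures are finite), this rewrites as
\begin{align}
 \int_{[0,1]^{m+n+1}} \prod_{i=1}^{m} W(y_i,x) \prod_{j=1}^{n} W(x,z_j) \d x \d y_1 \cdots \d y_m \d z_1 \cdots \d z_n,
\end{align}
which is precisely the homomorphism density $t(\sS_{m,n},W)$ of the star $\sS_{m,n}$ in the kernel $W$, where the central vertex corresponds to the integration variable $x$, the $m$ in-neighbours to the $y_i$, and the $n$ out-neighbours to the $z_j$.

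Next I would use the fact that $W$ represents $\Gamma$, so that $t(F,W) = t(F,\Gamma)$ for every digraph $F$; in particular $t(\sS_{m,n},W) = t(\sS_{m,n},\Gamma)$. Combining with the previous step gives
\begin{align}
 \int_{[0,1]^2} x^m y^n \d\nu(W)(x,y) = t(\sS_{m,n},\Gamma),
\end{align}
which by definition is the $(m,n)$-th moment of $\nu(\Gamma)$. Thus all bivariate moments of $\nu(W)$ and $\nu(\Gamma)$ agree. Since both are probability measures on the compact set $[0,1]^2$, the (bivariate) moment problem is determinate, so $\nu(W)=\nu(\Gamma)$.

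There is no real obstacle here beyond bookkeeping: the result amounts to a Fubini computation identifying the product-of-integrals expression with a star density, plus the definitional fact that $W$ representing $\Gamma$ means densities agree. The one small point worth being explicit about is that $\sS_{m,n}$ as defined has exactly the required edge pattern (all edges from $A_1$ to the singleton $A_2$ and from $A_2$ to $A_3$, with no constraints between $A_1$ and $A_3$), which is why the expansion of the product of integrals above matches $t(\sS_{m,n},\cdot)$ on the nose without extra constraints across non-adjacent vertex classes.
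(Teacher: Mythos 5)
Your proposal is correct and follows essentially the same route as the paper: compute the $(m,n)$-th moment of $\nu(W)$ via Fubini to identify it with $t(\sS_{m,n},W)$, use that $W$ represents $\Gamma$ to equate this with $t(\sS_{m,n},\Gamma)=\int x^m y^n \,\mathrm{d}\nu(\Gamma)$, and conclude by the method of moments (determinacy for compactly supported measures). Your remark that $\sS_{m,n}$ must carry no edges between $A_1$ and $A_3$ for the star density to match the product-of-integrals form is a worthwhile clarification, since the paper's verbal definition of $\sS_{m,n}$ is slightly ambiguous on that point.
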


The proof is another application of the method of moments. It follows by this result that any two equivalent tournament kernels have the same degree distribution.

\subsection{Score functions}
Interpreting the integral $\int_{[0,1]}W(x,y)\d y$ as the \emph{outdegree} at $x$, the following definition is natural.

\begin{definition}
 The \emph{score function} of a tournament kernel $W$ is defined as the function $f:[0,1]\to [0,1]$ given by $f(x)=\int_{0}^1W(x,y)\d y$. 
\end{definition}

Our aim is to characterise those functions which appear as the score functions of some tournament kernel. The following condition corresponds to the condition on the score sequence in Theorem \ref{thm:landaumoon}. 

\begin{definition}
 A function $f:[0,1]\to [0,1]$ is said to satisfy condition \textbf{I} if, for all measurable $B\subseteq [0,1]$, 
\begin{align}
\int_{B}f(x)\d x \geq \frac{\mu(B)^2}{2},
\end{align}
with equality if $\mu(B)=1$.
\end{definition}

It is easy to check that $1-f$ satisfies condition \textbf{I} if and only if $f$ satisfies \textbf{I}. Moreover, if $f$ is the score function of a tournament $W$ and $B\subseteq [0,1]$ is any measurable set, then
\begin{align}
 \int_{B}f(x)\d x=\int_B\int_{[0,1]}W(x,y)\d y \d x\geq \int_{B}\int_B W(x,y)\d y\d x = \frac{\mu(B)^2}{2}
\end{align}
where the final equality comes from the fact that $W(x,y)+W(y,x)=1$ almost everywhere. This shows that a function can be the score function of some tournament kernel only if it satisfies condition \textbf{I}. The converse statement is less trivial and is part of our main theorem.

\subsection{Main results on degree distributions and score functions}

Our main theorem is the following.
\begin{theorem}\label{thm:mainthm}
Let $f:[0,1]\to [0,1]$ be a function. The following statements are equivalent.
\begin{enumerate}[(i)]
 \item There exists a tournament limit $\Gamma$ with outdegree distribution $f(U)$, $U\sim U[0,1]$.
\label{list:1}
 \item There exists a tournament kernel $W$ with score function $f$. \label{list:2}
 \item The function $f$ satisfies \textbf{I}. \label{list:3}
\end{enumerate}
\end{theorem}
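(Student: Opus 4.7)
I would handle the easy directions first. The implication (ii) $\Rightarrow$ (iii) is the direct calculation shown in the excerpt, and (ii) $\Rightarrow$ (i) follows from Lemma \ref{thm:distsense} together with the defining property that $\nu^+(W)$ is the law of $f(U)$. For (i) $\Rightarrow$ (iii), the key observation is that condition \textbf{I} depends only on the distribution of $f$: the infimum of $\int_B f\d x$ over sets $B$ with $\mu(B)=\alpha$ equals $\int_0^\alpha f^*(x)\d x$, where $f^*$ is the non-decreasing rearrangement of $f$, and this quantity is determined by the distribution of $f$ alone. Thus if $\Gamma$ has outdegree distribution equal to the law of $f(U)$, any kernel representative $W$ has score function $g$ equimeasurable with $f$; since $g$ satisfies \textbf{I} by (ii) $\Rightarrow$ (iii), so does $f$.

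The substantive content is (iii) $\Rightarrow$ (ii), which I would establish by discretising $f$, applying Theorem \ref{thm:landaumoon}, and extracting a limit via compactness of $\overline{\cT}$. First I would reduce to $f$ non-decreasing: $f^*$ still satisfies \textbf{I}, and from any kernel $W^*$ with score function $f^*$ one constructs a kernel with score function $f$ via the conjugation $W(x,y) := W^*(\phi(x),\phi(y))$, where $\phi$ is a measure-preserving map on $[0,1]$ satisfying $f^*\circ\phi = f$ a.e.\ (which exists by standard rearrangement theory on the Lebesgue space). Assuming $f$ is non-decreasing, I would set $\bar f_i^{(n)} := n\int_{(i-1)/n}^{i/n} f(x)\d x$ and choose non-decreasing integers $d_1^{(n)} \leq \cdots \leq d_n^{(n)}$ within $O(1)$ of $n\bar f_i^{(n)}$, summing to exactly $\binom{n}{2}$. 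Condition \textbf{I} applied to $B = [0,k/n]$ gives $\sum_{i=1}^k \bar f_i^{(n)} \geq k^2/(2n)$, hence $n\sum_{i=1}^k \bar f_i^{(n)} \geq \binom{k}{2} + k/2$; the $\Theta(k)$ slack absorbs the per-coordinate rounding errors, so Landau's inequality $\sum_{i=1}^k d_i^{(n)} \geq \binom{k}{2}$ holds exactly, and Theorem \ref{thm:landaumoon} yields a tournament $G_n$ with score sequence $(d_i^{(n)})$.

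By compactness of $\overline{\cT}$ some subsequence of $(G_n)$ converges to a tournament limit $\Gamma$. The empirical outdegree distributions $\nu^+(G_n) = \frac{1}{n}\sum_i \delta_{d_i^{(n)}/n}$ converge weakly to the law of $f(U)$ (by Lebesgue differentiation applied to the averages $\bar f_i^{(n)}$), so $\nu^+(\Gamma)$ is this law. Any kernel representative of $\Gamma$ then has a score function equimeasurable with $f$, and a final measure-preserving conjugation makes it equal to $f$ almost everywhere, producing the desired kernel. The principal obstacle I anticipate is the rounding step: producing an integer sequence that exactly satisfies all partial Landau inequalities and the total-sum identity while remaining monotone. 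Condition \textbf{I} provides a buffer of order $k$ (rather than a constant) in the partial-sum inequalities, which is precisely what is needed to absorb per-coordinate $O(1)$ rounding while preserving monotonicity.
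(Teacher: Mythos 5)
The easy directions of your plan — $(ii)\Rightarrow(iii)$, $(ii)\Rightarrow(i)$, and the equimeasurability/rearrangement argument for $(i)\Rightarrow(iii)$ — agree with the paper in substance (the paper phrases the third via the Hardy--Littlewood inequality and Ryff's theorem, which is what you are using implicitly). The reduction to non-decreasing $f$ by conjugating a kernel $W^*$ with score function the monotone rearrangement is also the paper's move, appearing in the Discussion subsection of Section 3.

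The genuine gap is at the end of your proof of $(iii)\Rightarrow(ii)$. You extract $\Gamma$ from the tournament sequence $(G_n)$ by compactness of $\overline{\cT}$, identify its outdegree distribution with the law of $f(U)$, and then claim that ``any kernel representative of $\Gamma$ has a score function equimeasurable with $f$, and a final measure-preserving conjugation makes it equal to $f$ almost everywhere.'' The conjugation $W(\phi(\cdot),\phi(\cdot))$ has score function $g\circ\phi$, where $g$ is the score function of the original representative. So you need a measure-preserving $\phi$ with $g\circ\phi=f$ a.e. Equimeasurability of $g$ with a non-decreasing $f$ only guarantees the \emph{reverse} relation: by Ryff's theorem $g=g^{\ast}\circ\sigma=f^{\ast}\circ\sigma$ for some measure-preserving $\sigma$, i.e. $g=f\circ\tau$ for a measure-preserving $\tau$. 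A measure-preserving $\tau$ need not have a measure-preserving right inverse (e.g. $\tau(x)=2x\bmod 1$), so $f=g\circ\phi$ may have no solution. Concretely, if $\Gamma$ happens to be represented by $W$ with score function the identity but you chose the representative $W\big(\tau(\cdot),\tau(\cdot)\big)$, its score function is $\tau$, and no conjugation of that kernel produces score function $x$. The paper sidesteps this entirely: instead of passing to a tournament limit and then trying to come back to a kernel, it regards the step kernels $W_n$ (built from \emph{generalised} tournaments via Moon's real-valued Landau theorem, avoiding your integer-rounding step as well) as a bounded sequence in $L^2([0,1]^2)$, extracts a weakly convergent subsequence via Banach--Alaoglu, checks that the weak $L^2$ limit $W$ is still a tournament kernel, and notes that weak convergence of $W_n$ forces their score functions to converge weakly in $L^2$ to the score function of $W$; since they also converge a.e.\ to $f$, the score function of $W$ is $f$. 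That is a direct $(iii)\Rightarrow(ii)$ proof that never detours through the limit object $\Gamma$. Your compactness-in-$\overline{\cT}$ route proves $(iii)\Rightarrow(i)$ cleanly, but passing from $(i)$ to $(ii)$ is not a matter of conjugation and needs a genuinely different argument.

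A secondary, lesser issue: by working with $\{0,1\}$-tournaments rather than generalised ones, you must round $(n\bar f_i^{(n)})$ to integers that simultaneously satisfy all of Landau's inequalities, sum to $\binom{n}{2}$, and remain monotone. Rounding each coordinate individually accumulates error of order $k$ in the $k$-th partial sum, exactly matching the $k/2$ slack rather than being dominated by it, so this needs care (round the partial sums rather than the summands, then verify non-negativity and the subset inequalities). The paper avoids this by invoking Moon's real-valued extension of Landau's theorem, which constructs a generalised tournament whose scores are the exact real values $n\bar f_i^{(n)}-1/2$.
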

Alternatively, \ref{list:1} could be replaced by the equivalent statement that the measure (on $[0,1]$) induced by the outdegree distribution is $\Leb(f^{-1})$.

Note that we already proved \eqref{list:2} $\Rightarrow$ \eqref{list:3}. For the other directions our main ingredients will be discretisations, weak convergence (of score functions and tournament kernels), rearrangements and the Hardy--Littlewood inequality.

A natural question to ask is whether the degree distribution uniquely determines a tournament limit. The examples in Figure \ref{fig:nonunique} demonstrate that this is not always the case. 
\begin{figure}[h]
  \includegraphics{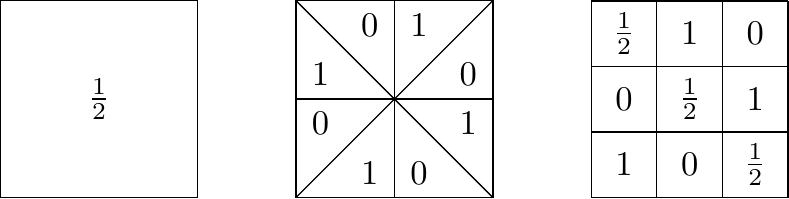}
  \caption{The three squares show three non--equivalent tournament kernels $[0,1]^2\to [0,1]$. (Note that the origin $(0,0)$ is in the top left corner.) Each realises the score function $f(x)=1/2$ and the degree distribution with an atom of mass $1$ at $(1/2,1/2)$.}
 \label{fig:nonunique}
\end{figure}

The cause of non--uniqueness turns out to be the presence of $3$--cycles. Denote by $\sC_3$ the digraph with vertex set $\{1,2,3\}$ and edge set $\{(1,2),(2,3),(3,1)\}$. The unique tournament limit $\Gamma$ satisfying $t(\sC_3,\Gamma)=0$ is called the \emph{transitive limit}. This can be represented by the kernel $W(x,y)=\mathbbm{1}(x\geq y)$, see \cite{Thornblad2016b} for a full characterisation of the transitive limit. If $\Gamma$ is the transitive limit, it is also not difficult to show that $\sT_n\to \Gamma$ as $n\to \infty$, where $\sT_n$ denotes the tournament with vertex set $\{1,2,\dots, n\}$ and edges $(i,j)$ if and only if $i<j$. The tournament $\sT_n$ is known as the transitive tournament on $n$ vertices.

\begin{remark}\label{rem:simple}
The corresponding uniqueness problem for score sequences of tournaments was posed and answered by Avery \cite{Avery1980}. A score sequence that can be realised by a unique (up to isomorphism) tournament is called \emph{simple}. A non--decreasing score sequence $(d_i)_{i=1}^n$ is \emph{irreducible} if 
\begin{align}
 \sum_{i=1}^k d_i > \binom{k}{2}, \quad i=1,\dots, n-1, \qquad \qquad \sum_{i=1}^n d_i = \binom{n}{2}.
\end{align}
Avery \cite{Avery1980} proved that an irreducible score sequence is simple if and only if it is one of $\{0\}$, $\{1,1,1\}$, $\{1,1,2,2\}$ and $\{2,2,2,2,2\}$. 

Furthermore, a tournament is \emph{irreducible} if and only if for all pairs of distinct vertices $u,v$, there exist directed paths between $u$ and $v$ in both directions. One can show that a tournament is irreducible if and only if its non--decreasing score sequence is irreducible. Since any tournament decomposes into a linear order of irreducible subtournaments, see \cite{Moon1968}, Avery's result implies that a tournament is the unique realisation of its score sequence if and only if the score sequences of each of its irreducible subtournaments are one of $\{0\}$, $\{1,1,1\}$, $\{1,1,2,2\}$ and $\{2,2,2,2,2\}$. 

If one takes a sequence of simple score sequences $((d_i^{(n)})_{i=1}^n)_{n=1}^{\infty}$, then the unique tournaments $(G_n)_{n=1}^{\infty}$ with these score sequences will look macroscopically more and more like large transitive tournaments (tournaments with no cycles). In fact, a sequence of tournaments with simple score sequences will always be convergent, and it will always converge to the transitive limit.This can be seen by changing each of its irreducible subtournamnets to a transitive subtournament of the same size. In total this changes $O(n)$ edges, which is not enough to change the limit object.
\end{remark}

The main consequence of the next theorem is that almost all degree distributions can be realised by several different tournament limits, with the one exception (the transitive limit) outlined above. In light of Remark \ref{rem:simple}, this should not be surprising. 

\begin{theorem}\label{thm:unique}
 Let $\Gamma$ be some tournament limit. The following statements are equivalent.
\begin{enumerate}[(i)]
 \item $\Gamma$ is uniquely determined by its degree distribution $\nu(\Gamma)$. \label{un1}
 \item $\Gamma$ is uniquely determined by its outdegree distribution $\nu^+(\Gamma)$.\label{un2}
 \item $\Gamma$ has outdegree distribution $\nu^+(\Gamma)=U[0,1]$.\label{un3}
 \item $\Gamma$ has outdegree distribution $\nu^+(\Gamma)=f(U[0,1])$, where $f:[0,1]\to [0,1]$ is a measure-preserving transformation.\label{un4}
 \item The $(1,1)$:th moment of the degree distribution of $\nu(\Gamma)$ equals $1/6$, i.e. $t(\sS_{1,1},\Gamma)=1/6$.\label{un5}
 \item $\Gamma$ is the transitive limit, i.e. $t(\sC_3,\Gamma)=0$.\label{un6}
\end{enumerate}
\end{theorem}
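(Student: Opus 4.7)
The plan is to reduce the six conditions to the single analytic criterion $\int_0^1 f(x)^2\,dx = 1/3$ for a representing score function $f$ of $\Gamma$, and then to close the uniqueness loop separately. The easy equivalences come first: (i) $\Leftrightarrow$ (ii) is immediate from Remark \ref{rem:diag}, since $\nu(\Gamma)$ is concentrated on $\{x+y=1\}$ and is therefore determined by either marginal; (iii) $\Leftrightarrow$ (iv) is trivial, because any measure-preserving $f:[0,1]\to[0,1]$ satisfies $f(U[0,1])=U[0,1]$; and (vi) $\Rightarrow$ (i),(ii) holds because the transitive limit is the unique limit with $t(\sC_3,\Gamma)=0$, so it is determined by any invariant whatsoever. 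The substantive task is to prove (iii) $\Leftrightarrow$ (v) $\Leftrightarrow$ (vi) and to close the loop with (ii) $\Rightarrow$ (vi).

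For the algebraic equivalence (v) $\Leftrightarrow$ (vi) I would combine three identities holding for any representing kernel $W$ with score function $f$. The moment identity provides $t(\sS_{1,1},\Gamma) = \int_0^1 y(1-y)\,d\nu^+(y) = 1/2 - \int_0^1 f^2$. A symmetrisation argument, exploiting that $K(y,z):=\int W(x,y)W(x,z)\,dx$ is symmetric in $y,z$ together with $W(y,z)+W(z,y)=1$, gives $t(\sT_3,W) = \tfrac12 \int f^2$. Substituting $W(z,x)=1-W(x,z)$ into $t(\sC_3,W)$ yields the decomposition
\begin{align*}
t(\sC_3,W) \;=\; t(\sS_{1,1},W) - t(\sT_3,W) \;=\; \frac12 - \frac{3}{2}\int_0^1 f^2,
\end{align*}
so that both $t(\sS_{1,1},\Gamma)=1/6$ and $t(\sC_3,\Gamma)=0$ reduce to $\int f^2 = 1/3$.

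The analytic core is (v) $\Rightarrow$ (iii); the reverse direction is the direct calculation $\int_0^1 y(1-y)\,dy = 1/6$. For the hard direction I pass to the non-decreasing rearrangement $\hat f$ of $f$, which preserves the distribution (so $\int \hat f^2 = \int f^2$) and, for non-decreasing functions, reduces condition \textbf{I} to $\Phi(t) := \int_0^t \hat f(x)\,dx - t^2/2 \geq 0$ with $\Phi(0)=\Phi(1)=0$. Two integrations by parts yield the key identity
\begin{align*}
\int_0^1 \hat f(x)^2\,dx \;=\; \frac{1}{3} \;-\; \int_0^1 \Phi(x)\,dx \;-\; \int_0^1 \Phi(x)\,d\hat f(x),
\end{align*}
where $d\hat f \geq 0$ is the Lebesgue--Stieltjes measure of the non-decreasing function $\hat f$. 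Both correction terms are non-negative, so $\int f^2 \leq 1/3$ in general, with equality iff $\Phi \equiv 0$, iff $\hat f(x) = x$ a.e., iff $f(U) \sim U[0,1]$.

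The remaining direction (ii) $\Rightarrow$ (vi) is the main obstacle. By contrapositive one must show: if $\Gamma$ is not the transitive limit, then there is a distinct $\Gamma' \neq \Gamma$ with $\nu^+(\Gamma') = \nu^+(\Gamma)$. My intended route is to take a representing kernel $W$, locate a positive-measure set on which $W$ is genuinely non-transitive (which exists because $t(\sC_3,W) > 0$ by the identity above), and perturb $W$ there to a non-equivalent kernel preserving the score function; an alternative, more combinatorial route is to approximate $\Gamma$ by finite tournaments $G_n$ whose score sequences fail to be simple in the sense of Remark \ref{rem:simple}, replace each $G_n$ by a non-isomorphic tournament with the same score sequence, and extract a convergent subsequence $G_n' \to \Gamma'$.
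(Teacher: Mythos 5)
Your algebraic reductions are correct and go somewhat further than the paper in one place. The identities $t(\sS_{1,1},W)=\tfrac12-\int f^2$ and $t(\sT_3,W)=\tfrac12\int f^2$ (via symmetrising $K(y,z)=\int W(x,y)W(x,z)\,dx$) are both right and give $t(\sC_3,W)=\tfrac12-\tfrac32\int f^2$; the paper instead uses the inclusion--exclusion identity $t(\sC_3,\Gamma)=t(\sS_{0,0},\Gamma)-3t(\sS_{0,1},\Gamma)+3t(\sS_{1,1},\Gamma)-t(\sC_3,\Gamma)$, but the two are equivalent. Your integration-by-parts argument showing $\int\hat f^2 = \tfrac13 - \int\Phi\,dx - \int\Phi\,d\hat f\le\tfrac13$, with equality iff $\hat f(x)=x$ a.e., is a clean direct proof of (v) $\Rightarrow$ (iii) that the paper does not give (it instead routes (v) $\Leftrightarrow$ (vi) $\Rightarrow$ (iii) using the cited uniqueness of the transitive limit). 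One small but genuine logical slip: the sentence ``the transitive limit is the unique limit with $t(\sC_3,\Gamma)=0$, so it is determined by any invariant whatsoever'' is a non sequitur --- being the unique limit with a given $\sC_3$-density does not by itself mean being the unique limit with a given outdegree distribution (every tournament limit has the same $\sS_{0,1}$-density, for instance). What actually closes this is your own identity $t(\sC_3,\Gamma)=\tfrac12-\tfrac32\int f^2$: since $\nu^+$ determines $\int f^2$, it determines $t(\sC_3,\Gamma)$, and then uniqueness of the transitive limit gives (vi) $\Rightarrow$ (ii). You have the pieces but should not present this step as a triviality.

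The real issue is that (ii) $\Rightarrow$ (vi), which is the entire substance of the paper's proof of this theorem, is present only as a strategy statement. ``Perturb $W$ there to a non-equivalent kernel preserving the score function'' is exactly what must be done, but the work is in the details: the paper constructs a one-parameter family $W_s$ by shifting mass around $3$-cycles inside a product of disjoint intervals $A_1\times A_2\times A_3$ (chosen with $W$ bounded away from $0$ and $1$ there), verifies by Fubini that all score functions $f_s$ coincide with $f$, and then --- this is the hard step your proposal omits entirely --- shows that $s\mapsto t(\sC_4,W_s)$ is a polynomial of degree exactly $4$ with strictly positive leading coefficient, hence nonconstant, so some $W_{s_0}$ is inequivalent to $W$. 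Without an argument of this kind, you have no control over whether your perturbation actually changes the limit. Your alternative combinatorial route (replace $G_n$ by another tournament with the same score sequence and pass to a subsequential limit $\Gamma'$) has the same gap and is arguably worse: a non-isomorphic realisation of the same score sequence can differ from $G_n$ in only $O(1)$ edges (reverse a single $3$-cycle), in which case $G_n'\to\Gamma$ as well, so one would need to quantify how many edges can be flipped and show the resulting limit is genuinely different, which is not obviously easier than the kernel perturbation.
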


It holds that
\begin{align}
 t(\sC_3,\Gamma)=t(\sS_{0,0},\Gamma)-3t(\sS_{0,1},\Gamma)+3t(\sS_{1,1})-t(\sC_3,\Gamma)
\end{align}
for any tournament limit $\Gamma$, so the $\sC_3$--density is determined uniquely by the degree distribution. Moreover, since $t(\sS_{0,0},\Gamma)=1$ and $t(\sS_{0,1},\Gamma)=1/2$ (the former is the ``vertex density'', the latter the ``edge density''), we have that $t(\sC_3,\Gamma)=0$ if and only if $t(\sS_{1,1},\Gamma)=1/6$. Since there is a unique limit (see \cite{Thornblad2016b}) with $t(\sC_3,\Gamma)=0$, this shows that there is a unique limit (i.e. the transitive limit) such that the $(1,1)$--moment of its degree distribution is $1/6$. This is the direction  \eqref{un5} $\Leftrightarrow$ \eqref{un6}. The direction \eqref{un1} $\Leftrightarrow$ \eqref{un2}. The directions \ref{un3} $\Leftrightarrow$ \eqref{un4} and  \eqref{un3} $\Rightarrow$ \eqref{un5} are straightforward. The direction \eqref{un6} $\Rightarrow$ \eqref{un2} follows since the transitive limit can be represented by $W(x,y)=\mathbbm{1}(y\leq x)$, the score function of which is $f(x)=x$. The direction \eqref{un6} $\Rightarrow$ \eqref{un3} is straightforward. The only difficult direction is \eqref{un2} $\Rightarrow$ \eqref{un6}, which we deal with in Section \ref{sec:uniqueness}.

Theorem \ref{thm:unique} adds to list of equivalent characterisations of the transitive limit given in \cite{Thornblad2016b}. Furthermore, a consequence of it (and the proof of \eqref{un2} $\Rightarrow$ \eqref{un6}) is the following corollary

\begin{corollary}
 Let $\mu$ be some distribution on $[0,1]$. Then there are either $0$, $1$ or infinitely many tournament limits with degree distribution $\mu$.
\end{corollary}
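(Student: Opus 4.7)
The plan is to reduce the claim to showing: if $\mu$ is realised as the degree distribution of at least two distinct tournament limits, then it is realised by infinitely many. Assuming $\Gamma_A \neq \Gamma_B$ both have degree distribution $\mu$, I would manufacture a continuous one--parameter family of tournament kernels sharing a common score function (and hence the common degree distribution $\mu$), and then use polynomiality of subgraph densities in the kernel to argue that this family realises uncountably many pairwise inequivalent tournament limits.

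First I would pick any kernels $W_A^{(0)}, W_B^{(0)}$ representing $\Gamma_A, \Gamma_B$ with score functions $f_A, f_B$. By Lemma~\ref{thm:distsense}, both $f_A(U)$ and $f_B(U)$ (for $U\sim U[0,1]$) have distribution $\nu^+(\Gamma_A) = \nu^+(\Gamma_B)$, so $f_A$ and $f_B$ are equimeasurable on $([0,1],\Leb)$. By the standard rearrangement theorem for equimeasurable functions on a non--atomic probability space, there exists a measure--preserving map $\sigma\colon [0,1]\to [0,1]$ with $f_B\circ \sigma = f_A$ almost everywhere. Setting $W_A:=W_A^{(0)}$ and $W_B(x,y):=W_B^{(0)}(\sigma(x),\sigma(y))$, a change of variables in both coordinates shows that $W_B$ is still a tournament kernel representing $\Gamma_B$ and that both $W_A, W_B$ have common score function $f:=f_A$. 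For $\lambda\in[0,1]$ the convex combination $W_\lambda:=\lambda W_A+(1-\lambda)W_B$ is again a tournament kernel with score function $\lambda f+(1-\lambda)f=f$, hence degree distribution $\mu$; let $\Gamma_\lambda$ denote the tournament limit it represents.

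To finish I would show that $\{\Gamma_\lambda:\lambda\in[0,1]\}$ is infinite. Since $\Gamma_A\neq\Gamma_B$, some finite digraph $T$ satisfies $t(T,\Gamma_A)\neq t(T,\Gamma_B)$. Each factor in
\begin{align}
t(T,W_\lambda)=\int_{[0,1]^{|V(T)|}} \prod_{(i,j)\in E(T)} W_\lambda(x_i,x_j)\,\prod_{k\in V(T)}\d x_k
\end{align}
is affine in $\lambda$, so $p(\lambda):=t(T,W_\lambda)$ is a polynomial of degree at most $|E(T)|$. Because $p(0)=t(T,\Gamma_B)\neq t(T,\Gamma_A)=p(1)$, this polynomial is non--constant; it therefore takes uncountably many values on $[0,1]$ while attaining each value at most $|E(T)|$ times, so the family $\{\Gamma_\lambda\}$ contains uncountably many pairwise distinct tournament limits. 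The main obstacle is the score--function matching in the middle paragraph: one has to upgrade the mere equimeasurability of $f_A$ and $f_B$ to pointwise equality through a measure--preserving rearrangement that can be pulled back through the kernels without altering the tournament limits they represent.
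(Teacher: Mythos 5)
The structure of your argument (manufacture a one--parameter family of kernels with a fixed score function, then observe that $t(T,W_\lambda)$ is a non--constant polynomial in $\lambda$ and hence hits infinitely many values) is exactly the mechanism the paper uses in the proof of Theorem \ref{thm:unique} \eqref{un2}$\Rightarrow$\eqref{un6}, so the final step is sound. The gap is in how you get to the common--score--function situation, and you have correctly identified where the trouble lies but not the severity of it.

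The assertion ``by the standard rearrangement theorem for equimeasurable functions on a non--atomic probability space, there exists a measure--preserving $\sigma$ with $f_B\circ\sigma = f_A$ a.e.'' is not a standard theorem and is in fact false. Ryff's theorem only supplies a measure--preserving $\sigma$ with $f = f^{\ast}\circ\sigma$ a.e., i.e.\ a map from an arbitrary function \emph{to its decreasing rearrangement}; it does not let you pass between two arbitrary equimeasurable functions, because measure--preserving maps on $[0,1]$ need not be invertible, and the relation ``$\exists$ m.p.\ $\sigma$ with $g\circ\sigma = f$'' is neither symmetric nor transitive. Concretely, take $f_A(x) = x$ and $f_B(x) = 2x \bmod 1$: both are $U[0,1]$--distributed, and any $\sigma$ with $f_B\circ\sigma = f_A$ a.e.\ must satisfy $\sigma(x) \in \{x/2,\ (x+1)/2\}$ a.e.; writing $A = \{x : \sigma(x) = x/2\}$, the measure--preserving condition forces $\Leb(A\cap[0,s]) = s/2$ for all $s$, hence (by Lebesgue differentiation) $\mathbbm{1}_A = 1/2$ a.e., which is impossible. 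So no such $\sigma$ exists, and your construction of $W_B$ with score function $f_A$ collapses. (This particular pair of functions cannot actually occur with $\Gamma_A\neq\Gamma_B$ since uniform outdegree forces the transitive limit, but it does refute the cited ``standard theorem'', and nothing in your argument invokes any constraint that would rule out analogous obstructions for non--uniform $\mu$.)

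The paper avoids this entirely: rather than trying to align two given representatives, it takes a single representative $W$ of any $\Gamma$ with $t(\sC_3,\Gamma)>0$ and perturbs it locally (the family $W_s$ in the proof of \eqref{un2}$\Rightarrow$\eqref{un6}), adding and subtracting mass around a 3--cycle so that the score function of $W_s$ is unchanged for all $s$ while $t(\sC_4,W_s)$ is a non--constant polynomial. The corollary then follows immediately: by Theorem \ref{thm:unique}, if $\mu$ is realised by at least two limits then it is not $U[0,1]$, so any representing kernel has $t(\sC_3,W)>0$, and the perturbation produces infinitely many inequivalent kernels with degree distribution $\mu$. If you want to salvage your write--up, the cleanest route is to cite Theorem \ref{thm:unique} to conclude non--transitivity and then invoke that perturbation family rather than the convex--combination family.
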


%

\subsection{Self--converse limits and kernels}
The \emph{converse} of a generalised tournament $G=(V(G),\alpha)$ is the tournament $G'=(V(G),\alpha')$ where $\alpha'(i,j)=1-\alpha(i,j)$ for all $i,j\in V(G)$. Intuitively, $G'$ is obtained by reversing all edges of $G$. Converses of digraphs are defined analogously. Two generalised tournaments $G_1=(V(G_1),\alpha_1)$ and $G_2=(V(G_2),\alpha_2)$ are \emph{isomorphic} if there exists a bijection $\rho : V(G_1) \to V(G_2)$ such that $\alpha_1(i,j)=\alpha_2(\rho(i),\rho(j))$ for all $i,j\in V(G_1)$. A generalised tournament $G$ is said to be \emph{self--converse} if $G$ and $G'$ are isomorphic. The following lemma gives a few equivalent definitions of self--converse tournaments.

\begin{lemma}\label{lem:selfconv}
Let $G=(V(G),\alpha)$ be a tournament. The following are equivalent.
\begin{enumerate}[(i)]
 \item $G$ is self--converse. \label{selfconv1}
 \item $G$ and $G'$ are isomorphic. \label{selfconv2}
 \item $t_{ind}(F,G)=t_{ind}(F',G)$ for any tournament $F$. \label{selfconv3}
 \item $t(F,G)=t(F',G)$ for any digraph $F$. \label{selfconv5}
 \item There exists a bijection $\rho : V(G)\to V(G)$ such that $\alpha(i,j)+\alpha(\rho(i),\rho(j))=n-1$ for all distinct $i,j\in V(G)$. \label{selfconv4}
\end{enumerate}
\end{lemma}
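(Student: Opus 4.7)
The natural strategy is to prove a short cycle of implications. I would establish (i) $\Leftrightarrow$ (ii) $\Leftrightarrow$ (v) directly from definitions, then (ii) $\Rightarrow$ (iv) and (ii) $\Rightarrow$ (iii), then (iv) $\Rightarrow$ (iii), and finally close with the only non-routine direction, (iii) $\Rightarrow$ (ii). The equivalence (i) $\Leftrightarrow$ (ii) is the definition of self-converseness. For (ii) $\Leftrightarrow$ (v), a bijection $\rho : V(G) \to V(G)$ realises an isomorphism $G \to G'$ iff $\alpha(i,j) = \alpha'(\rho(i),\rho(j)) = 1 - \alpha(\rho(i),\rho(j))$ for all distinct $i,j$, which is exactly the stated condition, read with the right-hand side equal to $1$ (the ``$n-1$'' in the printed statement appears to be a typographical slip, since $\alpha$ takes values in $\{0,1\}$).

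For (ii) $\Rightarrow$ (iv), I would combine two observations. First, isomorphic tournaments have equal homomorphism densities from every fixed digraph, so $t(F, G) = t(F, G')$ for all $F$. Second, for any digraph $F$ we have the reflection identity $t(F, G') = t(F', G)$: a map $\phi : V(F) \to V(G)$ is a homomorphism $F \to G'$ precisely when it is a homomorphism $F' \to G$, since $(\phi(u), \phi(v)) \in E(G')$ is the same statement as $(\phi(v), \phi(u)) \in E(G)$. Chaining these gives (iv); running the same argument with $t_{ind}$ in place of $t$ yields (ii) $\Rightarrow$ (iii) directly. The step (iv) $\Rightarrow$ (iii) is then standard M\"obius inversion between $t$ and $t_{ind}$ on the poset of digraphs on a fixed vertex set, ordered by arc addition; because the involution $F \mapsto F'$ commutes with this ordering, equality of $t(\cdot, G)$ across the $F \leftrightarrow F'$ orbit transports to equality of $t_{ind}(\cdot, G)$ across the same orbit.

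The crux is (iii) $\Rightarrow$ (ii), where the plan is to plug $F := G$ into condition (iii). By definition, $t_{ind}(G, G) \geq 1/v(G)! > 0$, since the identity bijection is an induced-isomorphism witness, so (iii) forces $t_{ind}(G', G) > 0$. But for a test tournament of the same order as $G$, positivity of the induced density is equivalent to the test tournament being isomorphic to $G$, so $G' \cong G$, giving (ii). The main obstacle I anticipate is not deep: it is entirely bookkeeping concerning the precise normalisation of $t_{ind}$ and a clean statement of the inversion in (iv) $\Rightarrow$ (iii). The genuine content is concentrated in the very short (iii) $\Rightarrow$ (ii) argument, which exploits the fact that condition (iii) is strong enough to be applied to the test graph $F = G$ itself.
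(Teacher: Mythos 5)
Your proposal is correct and follows essentially the same route as the paper's own proof: definition for (i) $\Leftrightarrow$ (ii), direct reading of the bijection for (ii) $\Leftrightarrow$ (v), the linear relationship between $t$ and $t_{ind}$ for moving between (iii) and (iv), and the crux step (iii) $\Rightarrow$ (ii) by plugging in $G$ itself (the paper plugs in $F = G'$ rather than $F = G$, but since $(G')' = G$ these are the same argument). You are also right that the ``$n-1$'' in item (v) is a slip: for a tournament $\alpha$ takes values in $\{0,1\}$, so the intended condition is $\alpha(i,j)+\alpha(\rho(i),\rho(j))=1$, which is exactly what it means for $\rho$ to be an isomorphism $G \to G'$.
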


The homomorphism conditions above motivate the following definition.
\begin{definition}
The \emph{converse} of a tournament limit $\Gamma$ is the unique tournament limit $\Gamma'$ with $t(F,\Gamma')=t(F',\Gamma)$ for any digraph $F$. If $\Gamma=\Gamma'$, then $\Gamma$ is said to be \emph{self--converse}. 
\end{definition}
 Equivalently, $\Gamma$ is self--converse if and only if $t(F,\Gamma)=t(F',\Gamma)$ for any digraph $F$.

To see that converses of tournament limits are well--defined, take some tournament limit $\Gamma$ and some sequence $(G_n)_{n=1}^{\infty}$ of tournaments converging to $\Gamma$ as $n\to \infty$. Since $t(F,G')=t(F',G)$ for any digraphs $F,G$, the sequence $(G_n')_{n=1}^{\infty}$ must also be a convergent sequence. Say that it converges to some limit $\Gamma'$. Then
\begin{align}
t(F,\Gamma')= \lim_{n\to \infty}t(F,G_n')=\lim_{n\to \infty}t(F',G_n)=t(F',\Gamma),
\end{align}
which shows that converses are well--defined.

The definition of the converse of a kernel mirrors that for generalised tournaments.

\begin{definition}
 The \emph{converse} of a tournament kernel $W:[0,1]^2\to [0,1]$ is the tournament kernel $W':[0,1]^2\to [0,1]$ defined by $W'(x,y):=1-W(x,y)=W(y,x)$. If $W$ and $W'$ are equivalent (represent the same tournament limit), then $W$ is said to be \emph{self--converse}.
\end{definition}

Converseness is well-behaved under ``representation''. To see this, suppose $W$ represents $\Gamma$. Since $t(F,\Gamma')=t(F',\Gamma)=t(F',W)=t(F,W')$, the converse $W'$ represents $\Gamma'$. This implies that also self--converseness is well--behaved under ``representation''.

\begin{lemma} \label{thm:defsensible}
 Let $\Gamma$ be a tournament limit represented by a kernel $W$. Then $\Gamma'$ is represented by $W'$. Furthermore, $\Gamma$ is self--converse if and only if $W$ is self--converse.
\end{lemma}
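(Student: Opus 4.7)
The plan is to reduce everything to the pointwise identity $t(F,W')=t(F',W)$ for every digraph $F$ and every tournament kernel $W$, which is essentially already used in the paragraph preceding the statement. Once this identity is established, both claims of the lemma drop out by unwinding definitions.

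First I would verify the identity $t(F,W')=t(F',W)$. Writing out the standard integral formula,
\begin{align}
 t(F,W') = \int_{[0,1]^{V(F)}} \prod_{(i,j)\in E(F)} W'(x_i,x_j) \prod_{i\in V(F)} \d x_i,
\end{align}
and recalling that $W'(x_i,x_j)=W(x_j,x_i)$, each factor becomes $W(x_j,x_i)$, i.e.\ the factor one would associate to the reversed edge $(j,i)\in E(F')$. Relabelling the product by edges of $F'$ therefore gives $t(F',W)$. This is really just a bookkeeping step, but it is the only place where one has to be a bit careful; I expect no substantive difficulty.

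Next I would deduce that $W'$ represents $\Gamma'$. Since $W$ represents $\Gamma$, we have $t(F,W)=t(F,\Gamma)$ for every digraph $F$. Then, applying the identity above and the defining property of the converse of a tournament limit,
\begin{align}
 t(F,W') = t(F',W) = t(F',\Gamma) = t(F,\Gamma'),
\end{align}
for every digraph $F$. Since homomorphism densities determine a tournament limit, this means $W'$ represents $\Gamma'$.

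Finally, the equivalence of self--converseness follows immediately. If $W$ is self--converse, then $W$ and $W'$ represent the same limit; but $W$ represents $\Gamma$ while $W'$ represents $\Gamma'$ by the first part, so $\Gamma=\Gamma'$, i.e.\ $\Gamma$ is self--converse. Conversely, if $\Gamma=\Gamma'$, then both $W$ and $W'$ represent $\Gamma$, hence are equivalent, so $W$ is self--converse. No further ideas beyond the identity $t(F,W')=t(F',W)$ are needed.
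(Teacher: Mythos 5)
Your proposal is correct and follows essentially the same route as the paper: the paper's argument is the one-line chain $t(F,\Gamma')=t(F',\Gamma)=t(F',W)=t(F,W')$, stated just before the lemma, and you supply the same chain together with an explicit verification of $t(F,W')=t(F',W)$ from the integral formula, which the paper leaves implicit. The self--converse equivalence then follows exactly as you describe.
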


It is easy to show that any convergent sequence of self--converse tournaments must converge to a self--converse tournament limit. However, we have not been able to show that any self--converse tournament limit is the limit of a sequence of self--converse tournament, except in some special cases outlined below. The difficulty stems from the fact that a sequence $(G_n)_{n=1}^{\infty}$ converging to some self--converse tournament limit $\Gamma$ need only be \emph{approximately} self--converse, in the sense that $|t(F,G_n)-t(F,G_n')|\to 0$ as $n \to \infty$, for any digraph $F$, so the usual construction via random tournaments $G(n,W)$ will not succeed. Instead we will define a new type of random tournaments which are self--converse while still being comparable to $G(n,W)$. For our definitions to make sense, we need to restrict ourselves to a subclass of self--converse kernels.

\begin{definition}
A tournament kernel is said to be \emph{strongly self--converse} if there exists a measure--preserving transformation $\sigma:[0,1]\to [0,1]$ satisfying $\sigma^2(x)=x$ for almost every $x\in [0,1]$ and $W(x,y)=W(\sigma(y),\sigma(x))$ for almost every $(x,y)\in [0,1]^2$.
\end{definition}

 Any strongly self--converse tournament kernel is self--converse, since $W'(x,y):=W(y,x)$ is equivalent to $W(\sigma(y),\sigma(x))=W(x,y)$. However, we do now know if the converse holds, even in the weaker sense of ``equivalence''.

\begin{question*}
Is every self--converse tournament kernel equivalent to a strongly self--converse tournament kernel?
\end{question*}

The answer is positive if the score function of $W$ is assumed to be almost everywhere injective. Strongly self--converse kernels are easier to handle and we are able to prove the following result.

\begin{proposition}\label{prop:selfconverse}
 If $W$ is a strongly self--converse tournament kernel, then there exists a sequence $(G_n)_{n=1}^{\infty}$ of self--converse tournaments which converges to $W$.
\end{proposition}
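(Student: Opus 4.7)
The plan is to adapt the standard randomised construction of $G(n,W)$ so that the output is forced to be self--converse by means of the involution $\sigma$. Sample iid uniforms $X_1,\dots,X_n$ on $[0,1]$ and set $Y_i:=\sigma(X_i)$; since $\sigma$ is measure--preserving, each $Y_i\sim U[0,1]$ as well. On the vertex set $\{1,\dots,2n\}$ I construct a random tournament $H_n$ that will witness its own self--converseness through the involution $\rho(i)=n+i$, $\rho(n+i)=i$. The self--converse constraints $\alpha(i,j)+\alpha(\rho(i),\rho(j))=1$ say that each edge in the top block $\{1,\dots,n\}$ determines its $\rho$--image in the bottom block $\{n+1,\dots,2n\}$, and that for $i\neq j$ the cross edges $(i,n+j)$ and $(j,n+i)$ share their orientation. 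Accordingly, for each $1\le i<j\le n$ sample independent Bernoullis $Z_{ij}\sim \mathrm{Be}(W(X_i,X_j))$ and $Z'_{ij}\sim \mathrm{Be}(W(X_i,Y_j))$, and for each $i$ sample $Z''_i\sim \mathrm{Be}(W(X_i,Y_i))$. Orient $i\to j$ (and hence $n+j\to n+i$) iff $Z_{ij}=1$; orient $i\to n+j$ (and hence $j\to n+i$) iff $Z'_{ij}=1$; and orient $i\to n+i$ iff $Z''_i=1$. By construction $H_n$ is a self--converse tournament with witness $\rho$.

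For this to be a faithful randomisation of $W$, the induced marginal probability of each edge should match what a $G(2n,W)$--style sampling would assign. The delicate case is the cross edge $(j,n+i)$, which in our construction inherits probability $W(X_i,Y_j)=W(X_i,\sigma(X_j))$, whereas the natural probability is $W(X_j,\sigma(X_i))$. These agree precisely because strong self--converseness gives $W(X_j,\sigma(X_i))=W(\sigma(\sigma(X_i)),\sigma(X_j))=W(X_i,\sigma(X_j))$ almost surely. Analogously, $W(\sigma(X_i),\sigma(X_j))=W(X_j,X_i)=1-W(X_i,X_j)$ justifies sampling the bottom--bottom edge through $1-Z_{ij}$. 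This is the only step in the argument where the \emph{strong} form of self--converseness is used essentially.

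To show $H_n\to W$, fix a digraph $F$ on $k$ vertices and write
\begin{align}
t(F,H_n)=\frac{1}{(2n)^k}\sum_{\mathbf{i}\in[2n]^k}\prod_{(u,v)\in E(F)}\alpha(i_u,i_v).
\end{align}
Split the sum into \emph{generic} tuples $\mathbf{i}$, in which all entries are distinct and no pair $\{c,n+c\}$ occurs, and \emph{degenerate} tuples, of which there are only $O(n^{k-1})$ and which thus contribute $O(1/n)$ to the average. Within a generic tuple the latent positions $(Z_{i_1},\dots,Z_{i_k})$, where $Z_m:=X_m$ for $m\le n$ and $Z_m:=Y_{m-n}$ for $m>n$, are iid $U[0,1]$; moreover, any two edges of $F$ are sampled by distinct independent Bernoullis among the $Z_{ij},Z'_{ij},Z''_i$, since a collision would force the tuple to contain some $\rho$--pair, contradicting genericity. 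Hence the conditional expectation of the product given $X_1,\dots,X_n$ equals $\prod_{(u,v)\in E(F)}W(Z_{i_u},Z_{i_v})$, and integrating gives $\E[t(F,H_n)]\to t(F,W)$. The same bookkeeping of shared random variables gives $\mathrm{Var}(t(F,H_n))=O(1/n)$. Thus $t(F,H_n)\to t(F,W)$ in probability for every digraph $F$; intersecting over the countably many $F$ and passing to a subsequence, one extracts a deterministic sequence of self--converse tournaments $(G_n)_{n=1}^\infty$ with $G_n\to W$.

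The main obstacle is the combinatorial bookkeeping in the third step: one must verify both that the strong self--converse identity makes every edge marginal come out right pathwise (not merely in law), and that within generic $k$--tuples no two edges of $F$ can be controlled by the same underlying Bernoulli. Once this is pinned down, the moment estimates are a direct analogue of those for the classical $G(n,W)\to W$ theorem.
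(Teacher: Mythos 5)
Your construction is exactly the paper's: the paper also builds a random self--converse tournament on $2n$ vertices with involution swapping the two halves, samples the top block as $G(n,W)$, couples the bottom block and cross edges so that the involution is an anti--isomorphism, and uses $W(x,y)=W(\sigma(y),\sigma(x))$ with $\sigma^2=\mathrm{id}$ to verify that every edge marginal is the ``correct'' $W$--value, then argues that a uniformly sampled $k$--subset avoids $\rho$--pairs with probability $1-O(1/n)$ so that the induced subtournament is $G(k,W)$--distributed. Your additional explicit variance estimate and diagonal extraction are a mild elaboration of the final step, which the paper handles by invoking the Diaconis--Janson criterion; the underlying argument is the same.
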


However, we do not know of any example  of a (non-strongly) self--converse tournament kernel for which there is no sequence of self--converse tournaments converging it, nor of any example of (non-strongly) self--converse tournament kernels for which there is a sequence of self--converse tournaments converging to it.

The following result is the kernel version of Lemma \ref{lem:selfconv}. 
\begin{lemma}\label{lem:Wselfconv}
Let $W$ be a tournament kernel. The following are equivalent.
\begin{enumerate}[(i)]
 \item $W$ is self--converse. \label{Wselfconv1}
 \item $W$ and $W'$ are equivalent. \label{Wselfconv2}
 \item $t_{ind}(F,W)=t_{ind}(F',W)$ for any tournament $F$. \label{Wselfconv3}
 \item $t(F,W)=t(F',W)$ for any digraph $F$. \label{Wselfconv5}
\end{enumerate}
If Conjecture \ref{con} is true, then the following condition is equivalent to the above.
\begin{enumerate}[(i)]
\setcounter{enumi}{4}
 \item There exist measure--preserving transformations $\sigma_1,\sigma_2 : [0,1]\to [0,1]$ such that $W(\sigma_1(x),\sigma_1(y))+W(\sigma_2(x),\sigma_2(y))=1$ almost everywhere. \label{Wselfconv4}
\end{enumerate}
\end{lemma}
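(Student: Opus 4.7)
The plan is to prove the equivalences \eqref{Wselfconv1} $\Leftrightarrow$ \eqref{Wselfconv2}, \eqref{Wselfconv2} $\Leftrightarrow$ \eqref{Wselfconv5} and \eqref{Wselfconv5} $\Leftrightarrow$ \eqref{Wselfconv3} in turn, and then, assuming Conjecture \ref{con}, the biconditional \eqref{Wselfconv2} $\Leftrightarrow$ \eqref{Wselfconv4}. The equivalence \eqref{Wselfconv1} $\Leftrightarrow$ \eqref{Wselfconv2} is immediate from the definition of self--converseness for tournament kernels given just before the statement. For \eqref{Wselfconv2} $\Leftrightarrow$ \eqref{Wselfconv5}, I rely on the fact that two tournament kernels represent the same tournament limit if and only if they have the same homomorphism density for every digraph $F$; this is built into the construction of $\overline{\cT}$ via subgraph density convergence. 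Combined with the identity $t(F,W')=t(F',W)$, obtained by reindexing the product over $E(F)$ as a product over $E(F')$ using $W'(x,y)=W(y,x)$, this says precisely that $W$ and $W'$ are equivalent if and only if $t(F,W)=t(F',W)$ for every digraph $F$.

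For \eqref{Wselfconv5} $\Leftrightarrow$ \eqref{Wselfconv3}: when $F$ is a tournament, every pair of distinct vertices carries exactly one directed edge, so the homomorphism and induced densities coincide, $t_{ind}(F,W)=t(F,W)$, and this immediately gives \eqref{Wselfconv5} $\Rightarrow$ \eqref{Wselfconv3}. The converse follows from the decomposition
\begin{align}
 t(F,W) = \sum_{F^*} t_{ind}(F^*,W),
\end{align}
in which $F^*$ ranges over the tournaments on $V(F)$ whose edge set contains $E(F)$; combined with the bijection $F^*\mapsto (F^*)'$ between tournament completions of $F$ and of $F'$, the hypothesis \eqref{Wselfconv3} applied to each $F^*$ sums term-by-term to yield \eqref{Wselfconv5}.

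For the conditional equivalence with \eqref{Wselfconv4}: the implication \eqref{Wselfconv4} $\Rightarrow$ \eqref{Wselfconv2} needs no conjecture. Since each $\sigma_i$ is measure-preserving, the pullback $(x,y)\mapsto W(\sigma_i(x),\sigma_i(y))$ has the same subgraph densities as $W$ and so is equivalent to $W$; meanwhile \eqref{Wselfconv4} rearranges to $W(\sigma_1(x),\sigma_1(y))=W'(\sigma_2(x),\sigma_2(y))$ almost everywhere, so chaining the three equivalences $W\sim W\circ\sigma_1 = W'\circ\sigma_2\sim W'$ yields $W\sim W'$. For the reverse direction \eqref{Wselfconv2} $\Rightarrow$ \eqref{Wselfconv4}, I invoke Conjecture \ref{con}, which I expect to be the tournament analogue of the strong equivalence theorem for graphons: any two equivalent tournament kernels $U$ and $V$ admit measure-preserving transformations $\sigma_1,\sigma_2:[0,1]\to[0,1]$ with $U(\sigma_1(x),\sigma_1(y))=V(\sigma_2(x),\sigma_2(y))$ almost everywhere. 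Taking $U=W$ and $V=W'$ supplies exactly the required $\sigma_1,\sigma_2$.

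The only delicate point in the unconditional part is the inclusion--exclusion bookkeeping for \eqref{Wselfconv3} $\Rightarrow$ \eqref{Wselfconv5}, which tacitly requires working with oriented digraphs so that the set of tournament completions of $F$ is well defined and in bijection with that of $F'$. The main obstacle, however, is the reverse direction \eqref{Wselfconv2} $\Rightarrow$ \eqref{Wselfconv4}: without Conjecture \ref{con}, mere equality of all subgraph densities of $W$ and $W'$ provides no obvious recipe for constructing the transformations $\sigma_1,\sigma_2$, and this is precisely the reason the paper defers this implication to the conjecture.
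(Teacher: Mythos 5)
Your argument matches the paper's overall structure: the equivalence of \eqref{Wselfconv1}--\eqref{Wselfconv5} follows from the definitions of equivalence and of converse kernels together with the identity $t(F,W')=t(F',W)$, and the conditional equivalence with \eqref{Wselfconv4} is handled exactly as in the paper, namely by rewriting the condition as $W(\sigma_1(x),\sigma_1(y))=W'(\sigma_2(x),\sigma_2(y))$ a.e.\ and noting that pullbacks along measure-preserving maps preserve equivalence, with the reverse direction invoking Conjecture~\ref{con} applied to the pair $W, W'$. The paper's written proof only spells out \eqref{Wselfconv2}~$\Leftrightarrow$~\eqref{Wselfconv4}, deferring the rest to ``definition, Lemma~\ref{lem:selfconv} and continuity''; you filled in those parts explicitly, which is fine.

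One small comment on your treatment of \eqref{Wselfconv3}~$\Rightarrow$~\eqref{Wselfconv5}. The decomposition $t(F,W)=\sum_{F^*}t_{ind}(F^*,W)$ over tournament completions $F^*$ of $F$ is correct only when $F$ is an oriented digraph, i.e.\ contains at most one of $(i,j)$, $(j,i)$ for each pair, a restriction you flag yourself. You can avoid the issue entirely and shorten the argument by chaining through \eqref{Wselfconv2}: from \eqref{Wselfconv3} and the identity $t_{ind}(F',W)=t_{ind}(F,W')$ (valid for any tournament $F$, since $t_{ind}(F,W)=t(F,W)$ for tournaments) one gets $t_{ind}(F,W)=t_{ind}(F,W')$ for every tournament $F$, and these densities already determine the represented limit, so $W$ and $W'$ are equivalent; then \eqref{Wselfconv5} follows from \eqref{Wselfconv2} by $t(F,W)=t(F,W')=t(F',W)$ for all digraphs $F$ without any completion bookkeeping. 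This is, in effect, what the paper means by invoking Lemma~\ref{lem:selfconv} and continuity.
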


Most of the directions follow either by definition or by Lemma \ref{lem:selfconv} and continuity, so we prove only \eqref{Wselfconv2} $\Leftrightarrow$ \eqref{Wselfconv4}. Note the slight difference between Lemma \ref{lem:selfconv} \eqref{selfconv4} and Lemma \ref{lem:Wselfconv} \eqref{Wselfconv4}. Under the assumption that $W$ be strongly self--converse, then we may assume in  Lemma \ref{lem:Wselfconv} \eqref{Wselfconv4} that $\sigma_1(x)=x$ and $\sigma_2^2(x)=x$ almost everywhere.

The above forms the basis of the theory of self--converse tournaments; our goal is now to characterise the degree distributions of the set of self--converse tournament limits; equivalently the score functions of the set of self--converse tournament kernels. The corresponding results for tournaments and generalised tournaments  were worked out in \cite{Eplett1979, Thornblad2016d}.

\begin{theorem}[\cite{Eplett1979, Thornblad2016d}] \label{thm:Eplett}
 A non--decreasing sequence $(d_i)_{k=1}^n$ of non--negative integers (respectively reals) is the score sequence of some self--converse tournament (respectively generalised tournament) if and only if 
\begin{align}
 \sum_{i\in J}d_i \geq \binom{|J|}{2}
\end{align}
for all $J\subseteq \{1,2,\dots, n\}$ with equality for $J=\{1,2,\dots, n\}$, and moreover $d_i+d_{n+1-i}=n-1$ for $i=1,\dots, n$.
\end{theorem}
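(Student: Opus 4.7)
The plan is to prove the two implications separately, handling the generalised and tournament cases by rather different arguments.

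\textbf{Necessity.} Both parts are soft. The inequality $\sum_{i\in J} d_i \geq \binom{|J|}{2}$ (with equality for $J = \{1,\dots,n\}$) is immediate from Theorem \ref{thm:landaumoon}. For the symmetry $d_i + d_{n+1-i} = n-1$, observe that if $G$ is self--converse then $G$ and $G'$ share the same multiset of scores. Since reversing every edge turns outdegree $d$ into outdegree $n-1-d$, the score multiset of $G'$ is $\{n-1-d_i\}_{i=1}^n$. Equating the non--decreasing rearrangements of the two multisets yields the claimed identity.

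\textbf{Sufficiency, generalised case.} I would use an averaging trick with the candidate involution $\rho(i) = n+1-i$. Take any generalised tournament $H$ with score sequence $(d_i)$ (which exists by Theorem \ref{thm:landaumoon}) and define, for $i \neq j$,
\begin{equation}
\alpha_K(i,j) \;:=\; \frac{\alpha_H(i,j) + 1 - \alpha_H(\rho(i), \rho(j))}{2},
\end{equation}
with $\alpha_K(i,i) = 0$. Routine verifications show $\alpha_K \in [0,1]$, $\alpha_K(i,j) + \alpha_K(j,i) = 1$, and $\alpha_K(i,j) + \alpha_K(\rho(i), \rho(j)) = 1$, so via Lemma \ref{lem:selfconv} the resulting generalised tournament $K$ is self--converse. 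Using the hypothesis $d_{n+1-i} = n-1-d_i$, the outdegree of $i$ in $K$ computes as $\tfrac{1}{2}(d_i + (n-1) - d_{n+1-i}) = d_i$, as required.

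\textbf{Sufficiency, tournament case.} Integrality destroys the averaging, so I would proceed by induction on $n$. Base cases $n \leq 2$ are trivial. For the step, insist that $\rho$ swaps $1 \leftrightarrow n$ and restricts to the reverse involution $i \mapsto n+1-i$ on the interior $\{2,\dots,n-1\}$. Then a choice of out--neighbours $S \subseteq \{2,\dots,n-1\}$ of vertex $1$, together with one free bit $\alpha(1,n)$, determines (via self--converseness) all edges between $\{1,n\}$ and the interior. A short calculation shows that the resulting interior score sequence
\begin{equation}
e_i \;=\; d_i - 1 + \mathbbm{1}[i \in S] - \mathbbm{1}[n+1-i \in S], \qquad i = 2, \dots, n-1,
\end{equation}
automatically has total $\binom{n-2}{2}$ and satisfies the symmetry relation $e_i + e_{n+2-i} = n-3$. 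Provided one can pick $S$ so that $(e_i)$ also satisfies the subset-sum inequality, induction produces a self--converse tournament on the interior, which one then reattaches to $\{1,n\}$ with the edges prescribed by $S$.

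\textbf{Main obstacle.} The crux is the existence of a suitable $S$ in the tournament case: while the total-sum and symmetry conditions fall out for free, enforcing the Landau--Moon inequality on $(e_i)$ requires a genuine combinatorial selection argument, essentially a Hall/Fulkerson--style construction or an exchange argument that starts from any candidate $S_0$ of the correct cardinality and locally swaps elements to extinguish deficits while preserving the $\rho$--symmetric structure. The necessity direction and the generalised case are essentially soft by comparison; the tournament case is where the real work lies, and it is what Eplett's original 1979 proof had to grind through.
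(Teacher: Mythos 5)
The paper does not prove Theorem \ref{thm:Eplett} at all: it is imported as a cited result from Eplett (1979) for the integer case and from Th\"ornblad (2016d) for the generalised case, so there is no in-paper proof to compare against. Evaluating your proposal on its own merits: the necessity argument and the generalised-case sufficiency are correct. In particular the symmetrisation $\alpha_K(i,j) = \tfrac{1}{2}\bigl(\alpha_H(i,j) + 1 - \alpha_H(\rho(i),\rho(j))\bigr)$ is a clean averaging construction, and the outdegree check $\tfrac{1}{2}\bigl(d_i + (n-1) - d_{n+1-i}\bigr) = d_i$ using the symmetry hypothesis goes through; note that the self-converseness certificate is the condition $\alpha_K(i,j) + \alpha_K(\rho(i),\rho(j)) = 1$, which is what you wrote (the ``$n-1$'' appearing in the paper's Lemma \ref{lem:selfconv}\eqref{selfconv4} is evidently a typo for $1$).

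The tournament case, however, has a genuine gap, and you identify it yourself. Your inductive framework is sensible (peel off $\{1,n\}$, encode the attachment by $S \subseteq \{2,\dots,n-1\}$ plus the free bit $\alpha(1,n)$, and observe that the interior sequence $e_i = d_i - 1 + \mathbbm{1}[i\in S] - \mathbbm{1}[n+1-i\in S]$ automatically inherits the correct total and the symmetry $e_i + e_{n+2-i} = n-3$). But the proof is not finished: you still have to \emph{prove} that $|S| = d_1 - \alpha(1,n)$ can be chosen so that $(e_i)$ satisfies the Landau--Moon subset-sum inequality, and that claim is exactly where all the combinatorial content of Eplett's theorem sits. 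As currently written, ``a genuine combinatorial selection argument, essentially a Hall/Fulkerson-style construction or an exchange argument'' is a description of the missing lemma, not a proof of it. There is also a small additional issue you do not address: when $n$ is odd, $\rho$ fixes the middle vertex, and one must check that the fixed-point score $d_{(n+1)/2} = (n-1)/2$ is handled consistently by the construction. Until the selection lemma is supplied (or a citation to Eplett's argument is accepted in its place), the tournament-case sufficiency is incomplete.
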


With this result in mind, we define the corresponding score function condition and state our main result regarding self--converse limits and kernels.

\begin{definition}
 A measurable function $f:[0,1]\to [0,1]$ is said to satisfy condition \textbf{II} if $f(x)+f(1-x)=1$ for almost all $x\in [0,1]$.
\end{definition}

 \begin{figure}[ht]
\includegraphics{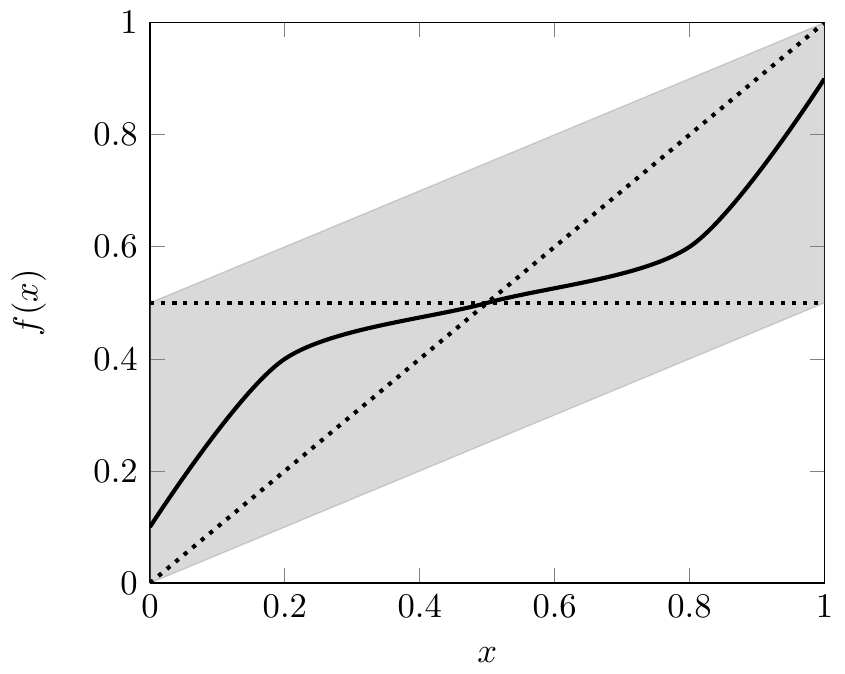}
\caption{A function $f$ (in black) satisfying condition \textbf{I} and \textbf{II}. Any non--decreasing function satisfying \textbf{I} must also satisfy $x/2 \leq f(x) \leq (x+1)/2$ and hence lie inside the gray box. The dotted lines corresponds to the two extreme cases $f(x)=x$ and $f(x)=1/2$. Condition \textbf{I} says that $\int_{[0,r]}(f(x)-x)\d x\geq 0$ with equality for $r=1$. Condition \textbf{II} says that $\{(x,f(x)) \ : \ x\in [0,1]\}$ is fixed under the transformation $(x,y)\mapsto (1-x,1-y)$.}
\end{figure}

\begin{theorem}\label{thm:conv}
 Let $f:[0,1]\to [0,1]$ be a non--decreasing function. The following are equivalent.
\begin{enumerate}[(i)]
 \item There exists a self--converse tournament limit $\Gamma$ with $\nu^{+}(\Gamma)=\Leb(f^{-1})$.\label{thm:convi}
 \item There exists a self--converse tournament kernel $W$ with score function $f$. \label{thm:convii}
 \item $f$ satisfies conditions \textbf{I} and \textbf{II}. \label{thm:conviii}
\end{enumerate}
\end{theorem}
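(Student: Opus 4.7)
The plan is to close the cycle \eqref{thm:convii} $\Rightarrow$ \eqref{thm:convi} $\Rightarrow$ \eqref{thm:conviii} $\Rightarrow$ \eqref{thm:convii}, using Theorem \ref{thm:mainthm} as the non-self--converse backbone. The implication \eqref{thm:convii} $\Rightarrow$ \eqref{thm:convi} is essentially contained in Lemma \ref{thm:defsensible}: a self--converse tournament kernel $W$ with score function $f$ represents a self--converse tournament limit $\Gamma$ whose outdegree distribution equals the law of $f(U)$, i.e.\ $\Leb(f^{-1})$.

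For \eqref{thm:convi} $\Rightarrow$ \eqref{thm:conviii}, condition \textbf{I} follows directly from Theorem \ref{thm:mainthm}, since the hypothesis places us in case \eqref{list:1} of that theorem. To obtain condition \textbf{II}, fix any kernel $W_0$ representing $\Gamma$ with score function $g$; then $W_0'$ represents $\Gamma'$ and has score function $1-g$. Since $\Gamma=\Gamma'$, the random variables $g(U)$ and $1-g(U)$ are equidistributed, and by hypothesis their common law is that of $f(U)$. Hence $f(U)$ and $1-f(U)$ are equidistributed; since additionally $U$ and $1-U$ are equidistributed, the two non--decreasing functions $f$ and $1-f(1-\cdot)$ have the same push--forward of Lebesgue measure. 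Two non--decreasing functions on $[0,1]$ with identical push--forward of Lebesgue must coincide almost everywhere (they share their generalised inverse), yielding $f(x)+f(1-x)=1$ a.e., which is condition \textbf{II}.

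The main direction is \eqref{thm:conviii} $\Rightarrow$ \eqref{thm:convii}. Given $f$ satisfying \textbf{I}, Theorem \ref{thm:mainthm} supplies some tournament kernel $\tilde W$ with score function $f$. I then symmetrise $\tilde W$ via the involution $\sigma(x)=1-x$ by setting
\begin{align}
 W(x,y) := \tfrac{1}{2}\bigl(\tilde W(x,y) + \tilde W(1-y,\,1-x)\bigr).
\end{align}
Plainly $W$ takes values in $[0,1]$. Applying $\tilde W(u,v)+\tilde W(v,u)=1$ a.e.\ to both of $(x,y)$ and $(1-y,1-x)$ yields $W(x,y)+W(y,x)=1$ a.e., so $W$ is a tournament kernel, while direct substitution gives $W(1-y,1-x)=W(x,y)$, making $W$ strongly self--converse (and hence self--converse) via $\sigma$. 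Finally, using condition \textbf{II} in the form $\int_0^1 \tilde W(z,1-x)\d z = 1-f(1-x) = f(x)$, the score function computation collapses to
\begin{align}
 \int_0^1 W(x,y)\d y = \tfrac{1}{2}\bigl(f(x)+f(x)\bigr) = f(x).
\end{align}

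The only non--routine ingredient is Theorem \ref{thm:mainthm}; once $\tilde W$ is in hand, the self--converse upgrade is a clean symmetrisation with no substantial obstacle. A discretisation approach via Eplett's Theorem \ref{thm:Eplett} -- approximating $f$ by self--converse score sequences, realising them as self--converse (generalised) tournaments, and extracting a convergent subsequence whose limit is automatically self--converse -- would also work but appears more laborious than the direct construction above.
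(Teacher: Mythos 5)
Your proof is correct, and in the crucial direction it takes a genuinely different and cleaner route than the paper. For \eqref{thm:convi} $\Rightarrow$ \eqref{thm:conviii}, the paper also argues that $f$ and $1-f$ have the same law, but it does so via the moment identities $t(\sS_{n,0},\Gamma)=t(\sS_{0,n},\Gamma)$ (using $\sS_{n,0}'=\sS_{0,n}$) and then appeals to decreasing rearrangements; your route through ``$W_0'$ represents $\Gamma'$ with score function $1-g$'' plus the elementary fact that a non--decreasing function is determined a.e.\ by its push--forward is the same underlying idea in different clothing. The real divergence is the forward direction: the paper proves \eqref{thm:conviii} $\Rightarrow$ \eqref{thm:convi} by discretising $f$ as in Lemma \ref{lem:score}, checking that the resulting score sequences satisfy $d_i+d_{n+1-i}=n-1$, invoking Theorem \ref{thm:Eplett} to realise them by self--converse generalised tournaments, and extracting a convergent subsequence whose limit is shown to be self--converse by a $t(F,G_n')=t(F',G_n)$ exchange. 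Your symmetrisation $W(x,y)=\tfrac12\bigl(\tilde W(x,y)+\tilde W(1-y,1-x)\bigr)$ bypasses Eplett's theorem and the discretisation/compactness machinery entirely, produces a kernel that is not merely self--converse but \emph{strongly} self--converse via $\sigma(x)=1-x$, and the score computation $\int_0^1 \tilde W(z,1-x)\d z = 1-f(1-x)=f(x)$ (the only place condition \textbf{II} enters) is where the whole construction hinges. This is a more direct argument than the paper's, with the added dividend of producing a strongly self--converse representative outright; the paper's approach has the mild benefit of showing that the limit is realised by honest self--converse finite (generalised) tournaments, which feeds into the surrounding discussion of Proposition \ref{prop:selfconverse}, but your proof establishes the stated equivalences with less overhead.
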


We could also have phrased condition \textbf{II} in terms of the degree distribution. The corresponding condition turns out to be $\nu^+=\nu^-$, and the following alternative characterisation is found.
\begin{proposition}\label{prop:in=out}
Let $\nu=(\nu^-,\nu^+)$ be the degree distribution of some tournament limit. Then there is a self--converse tournament limit $\Gamma$ with degree distribution $\nu(\Gamma)=\nu$ if and only if $\nu^+=\nu^-$.
\end{proposition}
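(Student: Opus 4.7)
The plan is to handle the two directions separately. For necessity of $\nu^+=\nu^-$, I would use that the converse of the star $\sS_{m,n}$, obtained by reversing all of its edges, is isomorphic to $\sS_{n,m}$, since reversal swaps the branches $A_1$ and $A_3$ while fixing the central vertex $A_2$. Self-converseness of $\Gamma$ then gives $t(\sS_{m,n},\Gamma)=t(\sS_{n,m},\Gamma)$ for all $m,n\geq 0$. Specialising to $n=0$ and invoking the star characterisation of the moments of $\nu(\Gamma)$, the $m$-th moments of $\nu^-$ and $\nu^+$ coincide for every $m\geq 0$; since both measures are supported on $[0,1]$, the method of moments yields $\nu^-=\nu^+$.

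For sufficiency I would reduce to Theorem \ref{thm:conv}. The preliminary observation is that by Remark \ref{rem:diag} the joint distribution $\nu$ is concentrated on the anti-diagonal $\{x+y=1\}$, so if $(X,Y)$ has law $\nu$ then $Y=1-X$. Hence $\nu^+=\nu^-$ is equivalent to $X$ and $1-X$ having the same distribution, i.e.\ to $\nu^+$ being symmetric about $1/2$. Let $f:[0,1]\to[0,1]$ be the non-decreasing quantile function of $\nu^+$; then $\Leb(f^{-1})=\nu^+$ automatically.

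The bulk of the argument is verifying that this $f$ satisfies both \textbf{I} and \textbf{II}. Condition \textbf{II} follows quickly from symmetry: the map $x\mapsto 1-f(1-x)$ is also non-decreasing, and for $U\sim U[0,1]$ the variable $1-f(1-U)$ has the law of the reflection of $\nu^+$ about $1/2$, which equals $\nu^+$ by symmetry. Uniqueness of the non-decreasing rearrangement then forces $f(x)=1-f(1-x)$ a.e., which is \textbf{II}. The verification of \textbf{I} is the main technical step. Theorem \ref{thm:mainthm} supplies a tournament kernel $W_0$ whose score function $g$ satisfies \textbf{I} and has $g(U)\sim\nu^+$; since $f$ is by construction the non-decreasing rearrangement of $g$, the Hardy--Littlewood inequality gives, for every measurable $B\subseteq[0,1]$,
\[
\int_B f(x)\d x \;\geq\; \int_0^{\mu(B)} f(x)\d x \;=\; \inf_{\mu(B')=\mu(B)}\int_{B'} g(x)\d x \;\geq\; \frac{\mu(B)^2}{2},
\]
with equality for $B=[0,1]$ because rearrangement preserves the mean.

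Having checked \textbf{I} and \textbf{II}, Theorem \ref{thm:conv} produces a self-converse tournament limit $\Gamma$ with $\nu^+(\Gamma)=\Leb(f^{-1})=\nu^+$. Since both $\nu(\Gamma)$ and $\nu$ are concentrated on $\{x+y=1\}$ with matching outdegree marginals, the full joint distributions must agree, completing the proof. I expect the main obstacle to be the \textbf{I} verification: the condition is phrased as a uniform estimate over \emph{all} measurable $B$, and Hardy--Littlewood is the exact tool needed to propagate the inequality from $g$, where it is given, to its non-decreasing rearrangement $f$.
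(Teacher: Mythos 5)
Your proof is correct and follows essentially the same route as the paper: necessity via $\sS_{n,0}'=\sS_{0,n}$ and the method of moments, and sufficiency by taking the non-decreasing quantile function $f$ of $\nu^+$, showing condition \textbf{II} via uniqueness of the non-decreasing rearrangement, and invoking Theorem \ref{thm:conv}. One small remark: the Hardy--Littlewood verification of condition \textbf{I} is valid but more work than needed. Since $f(U)\sim\nu^+$ and $\nu^+$ is by hypothesis the outdegree distribution of \emph{some} tournament limit, condition \textbf{I} for $f$ follows directly from the implication \eqref{list:1}~$\Rightarrow$~\eqref{list:3} of Theorem \ref{thm:mainthm}; the paper invokes this implicitly when it passes to the non-decreasing rearrangement and then applies Theorem \ref{thm:conv} without further comment. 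Your explicit check via rearrangement inequalities is sound and in fact amounts to re-deriving a special case of that implication.
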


\subsection{Outline}
The rest of this paper is outlined as follows. In Section \ref{sec:prelim} we introduce the necessary background on tournament limits and tournament kernels, including construction of useful random tournaments. In Section \ref{sec:degdist} we prove our initial results regarding the degree distribution and score functions, and provide a short discussion. In Section \ref{sec:uniqueness} we prove Theorem \ref{thm:unique}. Finally, in Section \ref{sec:selfconv} we prove our results on self--converse limits.

\section{Limits, kernels and random tournaments}\label{sec:prelim}

Let $F$ and $G$ be digraphs. A map $\phi:V(F)\to V(G)$ is said to be a \emph{homomorphism} if $(\phi(i),\phi(j))\in E(G)$ for all $(i,j)\in E(F)$, and is said to \emph{preserve non--adjacency} if also $(\phi(i),\phi(j))\notin E(G)$ for all $(i,j)\notin E(F)$. 
For any digraph $G$, denote by  $v(G):=|V(G)|$ its number of vertices. For any two digraphs $F,G$ we define the homomorphism density 
\begin{align}
 t(F,G):=\frac{\hom (F,G)}{v(G)^{v(F)}}
\end{align}
where $\hom(F,G)$ is the number of homomorphisms $V(F)\to V(G)$. The denominator is the total number of maps $V(F)\to V(G)$.
Similarly, the injective and induced homomorphism densities are defined as
\begin{align}
 t_{\text{inj}}(F,G):=\frac{\text{inj} (F,G)}{(v(G))_{v(F)}} \\
 t_{\text{ind}}(F,G):=\frac{\text{ind} (F,G)}{(v(G))_{v(F)}},
\end{align}
where $\text{inj} (F,G)$ denotes the number of injective homomorphisms $V(F)\to V(G)$ and $\text{ind} (F,G)$ denotes the number of injective homomorphisms $V(F)\to V(G)$ that also preserve non--adjacency. It can be shown that the densities $t(\cdot, G)$, $t_{inj}(\cdot, G)$ and $t_{ind}(\cdot, G)$ provide the same information for large digraphs $G$, so we can choose to work with either one of them.

\begin{remark}
 If $F$ is a non--tournament digraph and $G$ a tournament, then $t_{ind}(F,G)=0$. For this reason it is sometimes convenient to work with $t_{ind}$ rather than $t$. 
\end{remark}

The densities (any one of them) above form the basis of the limit theory of tournaments. 
Denote by $\cD$ the set of unlabelled digraphs, and by $\cT\subseteq \cD$ the set of unlabelled tournaments. 
The map $\tau(G):=(t(F,G))_{F\in \cD} \times (v(G)^{-1})\in [0,1]^{\cD}\times [0,1]$ is injective. Denote by $\overline{\cT}$ the closure of $\tau(\cT)$ in $[0,1]^{\cD}\times [0,1]$, observing that this space is compact and metrizable. 
The set $\widehat{\cT}=\overline{\cT}\setminus \tau(\cT)$ is the set of \emph{tournament limits}.
A tournament limit $\Gamma$ can therefore formally be seen as an element of $[0,1]^{\cD}\times \{0\}$. 
A sequence of tournaments $(G_n)_{n=1}^{\infty}$ is said to \emph{converge} to a tournament limit $\Gamma\in \widehat{\cT}$ if $v(G_n)\to \infty$ and $\tau(G_n)\to \Gamma$ as $n\to \infty$. We will always abuse notation and write $G_n \to \Gamma$ instead of $\tau(G_n)\to \Gamma$, and think of $\Gamma$ as the limit of $G_n$ if $G_n$ converges to $\Gamma$.

A \emph{tournament kernel} is a function $W:[0,1]^2\to [0,1]$ such that $W(x,y)+W(y,x)=1$ almost everywhere.
Any tournament kernel $W$ generates a random infinite tournament $G=G(\infty,W)$ as follows. 
Let $X_1,X_2, X_3, \dots$ be an infinite sequence of mutually independent and identically distributed $U[0,1]$ random variables. 
For each $1\leq i < j$, draw an edge from $i$ to $j$ with probability $W(X_i,X_j)$. 
Restricting to the subgraph induced by the first $n$ vertices, one obtains a random tournament $G(n,W)$. 
It can be shown that the limits $\lim_{n\to \infty}t(F,G(n,W))$ exist almost surely for each digraph $F$ and satisfy
\begin{align}
 \lim_{n\to \infty}t(F,G(n,W)) = \int_{[0,1]^{v(F)}} \prod_{(i,j)\in E(F)} W(x_i,x_j) \prod_{i\in V(F)}\d x_i =:t(F,W).
\end{align}

\begin{remark}
 It is possible to consider tournament kernels $\cS^2 \to [0,1]$ for some general probability space $\cS$ instead. This does not give any extra benefit so we will take $\cS=[0,1]$ throughout.
\end{remark}

A tournament limit $\Gamma$ is said to be \emph{represented} by a tournament kernel $W$ if 
\begin{align}
 (t(F,W))_{F\in \cD}\times \{0\} = \Gamma.
\end{align}
  Since $t(F,G(n,W))\to t(F,W)$ almost surely, we have that each tournament kernel represents some tournament limit. Furthermore, it was shown in \cite{Thornblad2016b} that each tournament limit can be represented by some tournament kernel. However, any tournament limit has many different representatives; for instance, applying a measure--preserving transformation $[0,1]\to [0,1]$ to both coordinates of a tournament kernel $W$ does not change the values of the densities $t(F,W)$ for any digraph $F$, so the new kernel still represents the same tournament limit. 

One source of difficulty is the non--uniqueness of tournament kernels which represent the same tournament limit. By analogy to the case of ``undirected'' kernels, we believe the following result to be true, but we are not aware of a proof in the literature.
\begin{conjecture}\label{con}
 If $W$ and $W'$ are equivalent tournament kernels, then there exist measure-preserving transformations $\sigma_1,\sigma_2 : [0,1]\to [0,1]$ such that $W(\sigma_1(x),\sigma_1(y))=W(\sigma_2(x),\sigma_2(y))$ almost everywhere.
\end{conjecture}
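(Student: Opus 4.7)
The plan is to imitate the analogous result in the theory of graph limits, namely the Borgs--Chayes--Lov\'asz--S\'os characterisation of graphon equivalence in terms of measure--preserving transformations, which in turn relies on the Aldous--Hoover representation theorem for exchangeable random arrays. Since the antisymmetric relation $W(x,y)+W(y,x)=1$ plays the role that the symmetric relation $W(x,y)=W(y,x)$ plays in the graphon setting, and since neither feature is essential to the Aldous--Hoover machinery, the basic strategy carries over to tournament kernels. I should note that the conjecture as stated contains a typographical error: the right-hand side should read $W'(\sigma_2(x),\sigma_2(y))$, and I proceed accordingly.

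First, I would translate kernel equivalence into an equality in distribution of the associated random infinite tournaments. Assume $W$ and $W'$ represent the same tournament limit, so that $t(F,W)=t(F,W')$ for every digraph $F$. A standard inclusion--exclusion argument converts this into $t_{\mathrm{ind}}(F,W)=t_{\mathrm{ind}}(F,W')$ for every tournament $F$. Since $t_{\mathrm{ind}}(F,W)$ equals the probability that the first $v(F)$ vertices of $G(\infty,W)$ induce a labelled copy of $F$, the finite-dimensional laws of $G(\infty,W)$ and $G(\infty,W')$ agree, and by Kolmogorov extension the two random countable tournaments have the same distribution as $\{0,1\}$-valued exchangeable arrays on $\mathbb{N}\times\mathbb{N}$ subject to the tournament antisymmetry constraint.

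Second, I would invoke the Aldous--Hoover representation theorem. Any jointly exchangeable $\{0,1\}$-valued array has a representation of the form $X_{ij}=\mathbbm{1}\!\left(\xi_{ij}\leq \widetilde W(U_i,U_j)\right)$ for iid $U[0,1]$ variables $(U_i)$ and $(\xi_{ij})$ and some measurable $\widetilde W$, and this is precisely the construction of $G(\infty,\widetilde W)$. The uniqueness half of Aldous--Hoover (due in this form essentially to Hoover) then says that two such representations of the same exchangeable array are related by measure--preserving transformations of the vertex-level latent variables. Applied to the two representations coming from $W$ and $W'$, this produces measure--preserving transformations $\sigma_1,\sigma_2:[0,1]\to[0,1]$ with $W(\sigma_1(x),\sigma_1(y))=W'(\sigma_2(x),\sigma_2(y))$ almost everywhere.

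The main obstacle is making the uniqueness half of Aldous--Hoover precise enough to deliver the claim, because the general statement allows additional randomisation at the edge level and only asserts equality in distribution of the representing functions rather than pointwise almost--everywhere identification. In the graphon case this is handled by the cut--metric approach: one shows that $\delta_\square(W,W')=0$ for equivalent kernels, approximates both by step functions associated to finite partitions, and matches the partitions via couplings whose marginals are measure--preserving. I would hope to carry out the tournament analogue of this, defining a directed cut distance $\delta_\square$ on tournament kernels, establishing its equivalence to convergence of homomorphism densities modulo rearrangement, and then running the step--function matching argument; the antisymmetry $W(x,y)+W(y,x)=1$ is preserved by all of these operations, so no new conceptual difficulty arises beyond bookkeeping. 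The remaining subtlety, and the reason the statement is only conjectural here, is the careful verification that the uniqueness-up-to-measure-preserving-equivalence extracted from Aldous--Hoover coincides with the notion of equivalence used in the tournament-limit framework of this paper.
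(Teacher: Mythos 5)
The statement you are attempting to prove is presented in the paper as an open conjecture; the author explicitly writes that he believes it to be true ``by analogy to the case of undirected kernels'' but is ``not aware of a proof in the literature,'' and it is subsequently invoked only conditionally (in Lemma~\ref{lem:Wselfconv}). There is therefore no proof in the paper against which your argument can be compared.

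You are right that the right-hand side of Conjecture~\ref{con} must read $W'(\sigma_2(x),\sigma_2(y))$; the conclusion as printed contains a typo, since otherwise $W'$ would not appear in it at all. Your outline follows the route one would expect---first using inclusion--exclusion to upgrade equality of homomorphism densities to equality in law of the exchangeable arrays $G(\infty,W)$ and $G(\infty,W')$, then appealing to the Aldous--Hoover machinery---and it is commendably honest about where the real difficulty lies. The uniqueness half of Aldous--Hoover, in the form you invoke, only asserts a distributional or coupling-level equivalence of the representing functions and permits edge-level randomisation; it does not by itself yield the almost-everywhere identity $W(\sigma_1(x),\sigma_1(y))=W'(\sigma_2(x),\sigma_2(y))$. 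What produces such an identity in the undirected setting is the Borgs--Chayes--Lov{\'a}sz weak-isomorphism theorem, whose proof runs through the cut distance, the counting and inverse counting lemmas, and a careful coupling argument that matches step-function approximants; you gesture at this in your final paragraph but do not carry it out. So as written your submission is a plausible plan rather than a proof, and the substantive work---establishing the directed analogues of those ingredients and checking that none of them is disturbed by the antisymmetry constraint $W(x,y)+W(y,x)=1$---remains to be done. That said, there appears to be no conceptual obstruction, and the paper's framing of the statement as a conjecture seems to reflect the absence of a convenient reference rather than any suspected difficulty.
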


\section{Results on degree distributions and score functions} \label{sec:degdist}

\begin{proof}[Proof of Lemma \ref{thm:distsense}]
We have 
\begin{align}
 \int_{[0,1]}x^my^nd\nu(\Gamma)
&=t(\sS_{m,n},\Gamma) \\
&=t(\sS_{m,n},W) \\
&=\int_{[0,1]}\left(\int_{[0,1]^m} W(x,y)\d y \right)^m \left(\int_{[0,1]^n}W(z,x)\d z \right)^n \d x \\
&= \E\left[\left(\int_{[0,1]^m} W(U,y)\d y \right)^m \left(\int_{[0,1]^n}W(z,U)\d z \right)^n \right].
\end{align}
The claim now follows by the method of moments.
\end{proof}

For the proof of Theorem \ref{thm:mainthm}, we shall proceed as follows. We have already proved \eqref{list:2} $\Longrightarrow$ \eqref{list:3}. We shall prove \eqref{list:2} $\Longrightarrow$ \eqref{list:1}, \eqref{list:3} $\Longrightarrow$ \eqref{list:2} and \eqref{list:1} $\Longrightarrow$ \eqref{list:3} in that order.

\begin{proof}[Proof of Theorem, \ref{thm:mainthm} \eqref{list:2} $\Longrightarrow$ \eqref{list:1}.]
 Let $\Gamma$ be the graph limit that is represented by $W$. Let $Y\sim \nu^+(\Gamma)$ and let $X\sim U[0,1]$. Then, for any measurable $B\in [0,1]$,
\begin{align}
 \pP\left[ Y\in B \right] = \pP\left[\int_{[0,1]}W(X,y)\d y\in B\right] = \pP\left[f(X)\in B\right],
\end{align}
so $Y\stackrel{d}{=}f(X)$, which proves the claim.


\end{proof}

To prove Theorem \ref{thm:mainthm}, \eqref{list:3} $\Rightarrow$ \eqref{list:2}, we will make use of a discretisation of $f$ to construct a sequence of generalised tournaments (invoking Theorem \ref{thm:landaumoon}). Each generalised tournament corresponds to a step tournament kernel, and passing to a fine enough subsequence we can ensure that these step functions converge weakly to some tournament kernel with score function $f$. 

Let $G=(\{1,2,\dots, n\},\alpha)$ be a generalised tournament on $n$ vertices. This induces a step tournament kernel $W_G$ defined by
\begin{align}
 W_G(x,y)=\begin{cases}
         \alpha(i,j), \qquad (x,y)\in \left(\frac{i-1}{n},\frac{i}{n}\right]\times\left(\frac{j-1}{n},\frac{j}{n}\right], i\neq j, \\ 
	 1/2, \qquad \quad (x,y)\in \bigcup_{i=1}^n \left((i-1)/n,i/n\right]^2.
        \end{cases}
\end{align}
The boundaries between the subsquares form a null set, so $W_G$ can be defined arbitrarily there. Conversely, if $W$ is a tournament kernel that is constant almost everywhere on each square $[(i-1)/n,i/n]\times [(j-1)/n,j/n]$ for $i,j=1,2\dots, n$, the reverse construction defines a generalised tournament. By this correspondence, the following lemma should not be surprising.

\begin{lemma}\label{lem:score}
 If $f$ satisfies \textbf{I}, then the sequence $(d_i)_{i=1}^{n}$ defined by
\begin{align}
 d_i=n^2 \int_{(i-1)/n}^{i/n}\left( f(x)-\frac{1}{2n}\right)\d x , \qquad i=1,\dots, n,
\end{align}
is a score sequence of some generalised tournament. Moreover, there is a tournament kernel $W_n$ with score function 
\begin{align}
 f_n(x)=n \int_{(i-1)/n}^{i/n}f(y)\d y, \qquad x\in \left( \frac{i-1}{n}, \frac{i}{n} \right], \qquad i=1,\dots, n.
\end{align}
\end{lemma}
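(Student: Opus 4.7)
My plan is to verify the Landau–Moon inequalities of Theorem \ref{thm:landaumoon} for the candidate sequence $(d_i)_{i=1}^n$, apply that theorem to obtain a generalised tournament $G$ with this score sequence, and then take $W_n:=W_G$ to be the step tournament kernel associated with $G$ under the correspondence described just before the lemma. The score function of $W_n$ is then identified by a direct integration.

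The key computation is that for any $J\subseteq\{1,\dots,n\}$, setting $B_J=\bigcup_{i\in J}((i-1)/n,i/n]$ gives $\mu(B_J)=|J|/n$ and
\begin{align}
\sum_{i\in J}d_i = n^2\int_{B_J}f(x)\,\d x - \frac{|J|}{2}.
\end{align}
Condition \textbf{I} applied to $B_J$ yields immediately $\sum_{i\in J}d_i\geq \binom{|J|}{2}$, with equality at $J=\{1,\dots,n\}$ by the equality clause of \textbf{I} (which amounts to $\int_0^1 f = 1/2$). The singleton case $J=\{i\}$ also gives the non-negativity $d_i\geq 0$, so Theorem \ref{thm:landaumoon} applies and produces a generalised tournament $G=(\{1,\dots,n\},\alpha)$ with score sequence $(d_i)$. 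The upper bound $d_i\leq n-1$ is not a separate hypothesis of Theorem \ref{thm:landaumoon}; in any case it is forced automatically by the Landau–Moon inequality on $\{1,\dots,n\}\setminus\{i\}$ combined with the equality case.

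Given $G$, I set $W_n:=W_G$ and compute the score function at $x\in((i-1)/n,i/n]$ by partitioning the $y$-integral over the $n$ subintervals of $[0,1]$. On each off-diagonal subinterval the integrand equals the constant $\alpha(i,j)$ and contributes $\alpha(i,j)/n$, while the diagonal square contributes $1/(2n)$. Summing yields $\int_0^1 W_G(x,y)\,\d y = \frac{d_i}{n}+\frac{1}{2n}$, and substituting the definition of $d_i$ gives $n\int_{(i-1)/n}^{i/n}f(y)\,\d y = f_n(x)$, as required.

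There is no real obstacle to overcome. The $\tfrac{1}{2n}$ correction built into the definition of $d_i$ is precisely what is needed to absorb the diagonal contribution $1/(2n)$ when reconstructing the score function of $W_G$; with this bookkeeping in place, the Landau–Moon verification and the score function identification are each one-line manipulations. The only subtlety worth flagging is that one needs both parts of condition \textbf{I} — the inequality, to produce the Landau–Moon inequalities on general $B_J$, and the equality clause at $B=[0,1]$, to produce the equality case $\sum_i d_i = \binom{n}{2}$.
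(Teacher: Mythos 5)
Your proof is correct and follows essentially the same route as the paper's: verify the Landau--Moon inequalities using condition \textbf{I} on the union of subintervals $B_J$, invoke Theorem \ref{thm:landaumoon} to get a generalised tournament, and take $W_n=W_{G_n}$. You spell out the final score-function computation (the $\tfrac{1}{2n}$ diagonal contribution cancelling against the correction in $d_i$), which the paper leaves implicit, but this is a minor addition rather than a different argument.
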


\begin{proof}
 For any $J\subseteq \{1,\dots n\}$, let $B_J = \bigcup_{j\in J}[(j-1)/n,j/n]$. Then $|J|=n\mu(B_J)$ and so, since $f$ satisfies \textbf{I},
\begin{align}
 \sum_{i\in J}d_i 
= n^2  \int_{B_J}\left( f(x)-\frac{1}{2n}\right)\d x  
\geq \frac{n^2 \mu(B_J)^2}{2} - \frac{n\mu(B_J)}{2} 
=\frac{|J|^2}{2} - \frac{|J|}{2} 
=\binom{|J|}{2}.
\end{align}
Also note that this implies that $d_i\geq 0$ for any $i=1,\dots, n$. Moreover, for $J=\{1,\dots, n\}$ we have $\sum_{i\in J}d_i=n^2\int_0^1 \left(f(x)-\frac{1}{2n}\right) \d x = \frac{n^2}{2}-\frac{n}{2}=\binom{n}{2} = \binom{|J|}{2}$. 
Therefore the condition in Theorem \ref{thm:landaumoon} is satisfied, so there exists some generalised $G_n$ tournament with score sequence $(d_i)_{i=1}^n$. The tournament kernel $W_n=W_{G_n}$ has score function $f_n$.
\end{proof}

Using this lemma, we can now prove \eqref{list:3} $\Longrightarrow$ \eqref{list:2} in Theorem \ref{thm:mainthm}. We will use basic concepts from weak convergence . A sequence of functions $f_n\in L^p$ is said to converge \emph{weakly} to a function $f\in L^p$ if for all $g\in L^q$ with $1/p+1/q=1$ we have $\int f_n(x)g(x)\d x \to \int f(x)g(x) \d x$.

\begin{proof}[Proof of Theorem \ref{thm:mainthm}, \eqref{list:3} $\Longrightarrow$ \eqref{list:2}.]
 Given $f$, construct the score functions $(f_n)_{n=1}^{\infty}$ and the associated sequence $(W_n)_{n=1}^{\infty}$ of tournament kernels as in Lemma \ref{lem:score}. Note that $f_n\to f$ a.e. as $n\to \infty$. The sequence $(W_n)_{n=1}^{\infty}$ can be seen as a sequence in the the Hilbert space $L_2([0,1]^2)$, more specifically, as a sequence in the closed, convex and bounded subset $\{U\in L_2([0,1]^2) \ : \ U(x,y)+U(y,x)=1, 0\leq U(x,y)\leq 1 \text{ a.e.} \}$. Note that this is precisely the set of tournament kernels. It follows by the Banach-Alouglu theorem that every sequence in this set has a weakly convergent subsequence, and (by weak closure) the weak limit must also lie in the same set. Passing to a subsequence if necessary, we may therefore assume that $(W_n)_{n=1}^{\infty}$ converges weakly to some tournament kernel $W$. 

We show that $f$ converges weakly to the score function of $W$. Take any $g\in L_2([0,1])$. Since $W_n\to W$ weakly, it follows by applying Fubini's theorem, that
\begin{align}
\int_0^1 f_n(x)g(x)\d x 
&= \int_0 ^1 \left(\int_0^1 W_n(x,y)\d y \right)g(x)\d x \\
&= \int_0^1 \int_0^1 g(x)W_n(x,y)\d x\d y \\
&\to \int_0^1 \int_0^1 g(x)W(x,y)\d x\d y\\
&=\int_0^1 \left(\int_0^1 W(x,y)\d y \right)g(x) \d x,
\end{align}
i.e. $f_n\to \int_0^1 W(\cdot, y)\d y$ weakly. But since also $f_n\to f$ a.e., it holds that $f=\int_0^1 W(\cdot,y)\d y$ a.e.. By changing $W$ on a null set, we may assume that the equality holds everywhere, i.e. $f$ is the score function of $W$. 
\end{proof}

To show \eqref{list:1} $\Rightarrow$ \eqref{list:3} we use the theory of decreasing rearrangements and the Hardy--Littlewood inequality, along with the fact that we know that \eqref{list:2} $\Rightarrow$ \eqref{list:3}. 
Let us first recall some basic facts from the theory of decreasing rearrangements. 
For a reference to this, we refer the reader to \cite{BennettSharpley1988}. 

Given a function $f:[0,1]\to [0,1]$, define $\Leb_f:[0,1]\to [0,1]$ by $\Leb_f(t) = \Leb\{x\in [0,1] \ : \ f(x)> t \}$. The \emph{decreasing} rearrangement of the function $f:[0,1]\to [0,1]$ is the function $f^{\ast}:[0,1]\to [0,1]$ defined by $f^{\ast}(t)=\inf\{\lambda \in [0,1] \ : \ \Leb_f(\lambda)\leq t \}$. 
The \emph{increasing rearrangement} of $f$ is the function $f_{\ast}:[0,1]\to [0,1]$ defined by $f_{\ast}(t)=f^{\ast}(1-t)$. 
The Hardy--Littlewood inequality states that
\begin{align}
\int_{0}^1f^{\ast}(x)h_{\ast}(x)\d x \leq \int_{0}^1f(x)h(x)\d x \leq \int_{0}^1f^{\ast}(x)h^{\ast}(x)\d x
\end{align}
for any measurable $f,h:[0,1]\to [0,1]$. We shall only make use of the first inequality. 
In particular, we will take $h(x)=\mathbbm{1}_{\{x\in B\}}$ for any measurable $B\subseteq [0,1]$, whence $h_{\ast}(x)=\mathbbm{1}_{\{x\in [1-\mu(B),1] \}}$, while $f$ will be the score function of some tournament kernel. 
We also mention the result \cite{Ryff1970} that given any measurable $g:[0,1]\to [0,1]$, there exists a measure--preserving $\sigma:[0,1]\to [0,1]$ such that $g^{\ast}(\sigma(x))=g(x)$ for almost all $x\in [0,1]$.

The notion of a decreasing rearrangement is useful to us because the cumulative distribution function of the measure $\nu^+(\Gamma)=\Leb(f^{-1})$ is the function $1-\Leb_f$, and $f^{\ast}(t)=\Leb \{\lambda\in [0,1] \ : \ \Leb_f(\lambda) > t \}$. 
Since a measure on $[0,1]$ is uniquely determined by its cumulative distribution function, we may consider $\Leb_{f^{\ast}}$ instead of $\Leb(f^{-1})$.

\begin{proof}[Proof of Theorem, \ref{thm:mainthm} \eqref{list:1} $\Longrightarrow$ \eqref{list:3}.]
Let $W$ be any kernel representing $\Gamma$, and let $g$ be the score function of $W$. It follows by \eqref{list:2} $\Longrightarrow$ \eqref{list:1} that $1-\Leb_f=1-\Leb_g$, whence $\Leb_f=\Leb_g$ and $f^{\ast}=g^{\ast}$.

Let $B\subseteq [0,1]$ be any measurable set with $\mu(B)=b$ and let $\sigma$ be a measure--preserving map such that $g^{\ast}(\sigma(x))=g(x)$ for almost all $x\in[0,1]$. By the Hardy--Littlewood inequality, $f^{\ast}=g^{\ast}$, the fact that $g^{\ast}(\sigma(x))=g(x)$, the implication \eqref{list:2}  $\Rightarrow$ \eqref{list:3} of Theorem \ref{thm:mainthm} and the fact that $\sigma$ is measure--preserving, it follows that
\begin{align}
 \int_{B}f(x)\d x \geq \int_{0}^1\mathbbm{1}_{\{x\in [1-b,1]\}}f^{\ast}(x)\d x
& =\int_{0}^1\mathbbm{1}_{\{x\in [1-b,1]\}}g^{\ast}(x)\d x \\
&=\int_0^1 \mathbbm{1}_{\{\sigma(x)\in [1-b,1]\}}g^{\ast}(\sigma(x))\d x \\
&=\int_{\sigma^{-1}([1-b,1])}g(x)\d x \\
& \geq \frac{(\mu(\sigma^{-1}([1-b,1]))^2}{2} \\
& = \frac{\mu(B)^2}{2}.
\end{align}
Therefore $f$ satisfies condition \textbf{I}.
\end{proof}

\subsection{Discussion}
 The degree distribution only depends on $f^{\ast}$, so one may restrict oneself to the class of decreasing functions $[0,1]\to [0,1]$. That is, if $\Gamma$ is a tournament limit with degree distribution $\Leb(f^{-1})=\Leb((f^{\ast})^{-1})$, then by Theorem \ref{thm:mainthm} there exists a tournament kernel $W^{\ast}$ with score function $f^{\ast}$. Let $\sigma:[0,1]\to [0,1]$ be a measure--preserving transformation such that $f=f^{\ast}\circ \sigma$. Then $W(x,y):=W^{\ast}(\sigma(x),\sigma(y))$ is a tournament kernel equivalent to $W^{\ast}$ (all the subgraph densities are the same), and moreover it has score function $f$, since
\begin{align}
 \int_{0}^{1}W(x,y)\d y  = \int_0^1 W^{\ast}(\sigma(x),\sigma(y))\d y = \int_0^1 W^{\ast}(\sigma(x),y)\d y = f^{\ast}(\sigma(x))=f(x).
\end{align}
It follows by this construction that if $f_1,f_2$ are two score functions with the same decreasing rearrangement, then there exist equivalent tournament kernels $W_1$ and $W_2$ with score functions $f_1$ and $f_2$ respectively.

Moreover, if $f$ is assumed to be non--decreasing, then condition \textbf{I} simplifies and needs only be checked for sets of the form $B=[0,r]$, $0<r<1$. This follows since, for arbitrary measurable $B\subseteq [0,1]$ with $\mu(B)=r$, we have by monotonicity
\begin{align}
 \int_B f(x)\d x \geq \int_0^r f(x)\d x = \frac{r^2}{2} = \frac{\mu(B)^2}{2}.
\end{align}

Finally we discuss how the score function relates to certain moment problems. If $\Gamma$ is such that $\nu^{+}(\Gamma)=\Leb(f^{-1})$, then it follows by Theorem \ref{thm:distsense} that
\begin{align}
 t(\sS_{m,n},\Gamma)=\int_0^1 f(x)^m (1-f(x))^n\d x.
\end{align}
In particular, the numbers $t(\sS_{m,n},W)$ determine the moments of the function $f$. Using this relationship (or arguing via inclusion--exclusion type formulae that exist for tournaments and tournament kernels), this formula shows that 
\begin{align}
t(\sS_{m,n},\Gamma)=\sum_{\ell=0}^m \binom{m}{\ell}(-1)^{\ell} t(\sS_{0,\ell+n},\Gamma)=\sum_{\ell=0}^n \binom{n}{\ell}(-1)^{\ell}t(\sS_{m+\ell,0})
\end{align}
which is another way to see  that the degree distribution $\nu(\Gamma)$ is determined by its first or second marginals, $\nu^-(\Gamma)$ or $\nu^{+}(\Gamma)$, see also Remark \ref{rem:diag}.

The moments of $f$ determine $f$ up to a measure--preserving transformation. See e.g. \cite[Proposition A.18]{Lovaszbook}, which states that two measurable functions $f,g:[0,1]\to [0,1]$ have the same moments if and only if there exist measure--preserving transformations $\sigma,\sigma':[0,1]\to [0,1]$ such that $f\circ \sigma = g\circ \sigma'$ almost everywhere. Given a function $f:[0,1]\to [0,1]$, we call the sequence $\int_0^1 f(x)^k \d x$, $k=0,1,2,\dots$ its moment sequence. 
The \emph{Hausdorff moment problem} calls for the characterisation those non--negative sequences $(a_k)_{k=0}^{\infty}$ that can be realised as the moments of some function $f:[0,1]\to [0,1]$.

\begin{proposition}[\cite{Lovaszbook}, Proposition A.20]\label{prop:momentchar}
Let $(a_k)_{k=0}^{\infty}$ be a bounded non--negative sequence. The following are equivalent.
\begin{enumerate}[(i)]
 \item $(a_k)_{k=0}^{\infty}$ is the moment sequence of some measurable function $[0,1]\to [0,1]$.
 \item $a_0=1$ and $\sum_{k=0}^m(-1)^k\binom{m}{k}a_{n+k}\geq 0$ for all $n,m\geq 0$.\label{prop:momentchar1}
 \item $a_0=1$ and the infinite matrix $[a_{n+m}]_{n,m\geq 0}$ is positive semi--definite.\label{prop:momentchar2}
\end{enumerate}
\end{proposition}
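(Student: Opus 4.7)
The plan is to verify the trivial directions (i) $\Longrightarrow$ (ii) and (i) $\Longrightarrow$ (iii) by direct integral computations, and then close the loop from (ii) back to (i) by invoking classical moment-problem theory. For (i) $\Longrightarrow$ (ii), starting from $a_k=\int_0^1 f(x)^k\,\d x$, I would expand by the binomial theorem:
\begin{align}
\sum_{k=0}^m (-1)^k\binom{m}{k} a_{n+k}= \int_0^1 f(x)^n(1-f(x))^m\,\d x\geq 0,
\end{align}
where non-negativity holds because both $f$ and $1-f$ take values in $[0,1]$. For (i) $\Longrightarrow$ (iii), the quadratic form identity
\begin{align}
\sum_{n,m} c_n c_m a_{n+m}=\int_0^1\Big(\sum_{n}c_n f(x)^n\Big)^2\d x\geq 0
\end{align}
witnesses that the infinite Hankel matrix $[a_{n+m}]$ is positive semi-definite.

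For (ii) $\Longrightarrow$ (i), I would invoke the classical Hausdorff moment theorem: with $a_0=1$, condition (ii) is precisely the assertion that $(a_k)$ is a completely monotone sequence starting at $1$, which is known to characterise moment sequences $a_k=\int_0^1 y^k\,\d\mu(y)$ of probability measures $\mu$ on $[0,1]$. To realise $(a_k)$ as moments of a \emph{function} rather than of a measure, I would let $F(y)=\mu([0,y])$ be the cumulative distribution function and set $f(x):=\inf\{y\in[0,1]:F(y)\geq x\}$. Then $f:[0,1]\to[0,1]$ is non-decreasing and pushes Lebesgue measure forward to $\mu$, whence $\int_0^1 f(x)^k\,\d x=\int_0^1 y^k\,\d\mu(y)=a_k$, as required.

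The hard part will be (iii) $\Longrightarrow$ (i). My plan is to invoke the Hamburger moment theorem to produce a positive measure $\mu$ on $\mathbb{R}$ with moments $a_k$, and then use boundedness of $(a_k)$ to force $\mathrm{supp}(\mu)\subseteq[-1,1]$ (otherwise $a_{2k}=\int x^{2k}\,\d\mu$ would grow without bound). The main obstacle is the further reduction of the support to $[0,1]$: bounded non-negative moments alone do not rule out mass in $[-1,0)$, as the sequence $(1,0,1,0,\dots)$ (moments of $\tfrac12(\delta_{-1}+\delta_1)$) demonstrates. Extracting support in $[0,1]$ from (iii) therefore requires leveraging additional Stieltjes-type positivity—morally, one needs the shifted Hankel matrices $[a_{n+m+1}]$ and $[a_{n+m}-a_{n+m+1}]$ to both be positive semi-definite—which in the present bounded setting should follow from (iii) via a limiting Bernstein-polynomial argument. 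Once $\mathrm{supp}(\mu)\subseteq[0,1]$ is established, the quantile construction of the previous paragraph furnishes the desired function $f$.
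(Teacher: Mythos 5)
The paper does not actually prove this proposition; it is imported verbatim from Lovász's book (Proposition A.20) and used as a citation, so there is no in-paper argument to compare yours against. Assessing your proposal on its own merits: the directions (i) $\Rightarrow$ (ii), (i) $\Rightarrow$ (iii) and (ii) $\Rightarrow$ (i) are correct and standard --- the binomial expansion, the quadratic-form identity, and the Hausdorff characterisation of complete monotonicity together with the quantile-function realisation.

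The direction (iii) $\Rightarrow$ (i), however, has a genuine gap, and you have in fact written down a counterexample to the implication without recognising it as such. Your sequence $(1,0,1,0,\dots)$, the Hamburger moments of $\tfrac12(\delta_{-1}+\delta_1)$, is bounded, non-negative, has $a_0=1$, and its Hankel matrix $[a_{n+m}]$ \emph{is} positive semi-definite: for any finitely supported vector $(v_n)$,
\begin{align}
\sum_{n,m\geq 0} v_n v_m\, a_{n+m} \;=\; \tfrac12\Bigl(\sum_{n} v_n\Bigr)^{2} \;+\; \tfrac12\Bigl(\sum_{n} (-1)^{n} v_n\Bigr)^{2} \;\geq\; 0.
\end{align}
So this sequence satisfies every stated hypothesis plus condition (iii), yet it fails (i) (and (ii)): for any measurable $f:[0,1]\to[0,1]$ one has $\int f^{k+1}\leq\int f^{k}$, so the moment sequence must be non-increasing, whereas here $a_1=0<a_2=1$; directly, $a_1=\int f=0$ forces $f=0$ a.e., contradicting $a_2=1$. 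Consequently the hoped-for Bernstein-polynomial or ``limiting'' argument cannot extract the additional Stieltjes-type positivities (PSD of $[a_{n+m+1}]$ and of $[a_{n+m}-a_{n+m+1}]$) from (iii): your own example shows these are genuinely independent of (iii) under the stated hypotheses of boundedness and entrywise non-negativity. Before trying to repair this step you should re-read Lovász's Proposition A.20 to check whether it carries extra hypotheses (e.g.\ monotonicity of $(a_k)$, or the PSD-ness of the shifted/localising Hankel matrices) that were dropped in the paper's restatement, and if not, flag the apparent inconsistency of condition (iii) with (i) and (ii) to the author.
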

Notice that $\sum_{k=0}^m(-1)^k\binom{m}{k}a_{n+k} = \int_{0}^1 f(x)^m(1-f(x))^n \d x$ if $(a_k)_{k=0}^{\infty}$ is the moment sequence of $f$.

In view of Proposition \ref{prop:momentchar} and what we have proved so far, we are therefore dealing with a restricted form of the Hausdorff moment problem, where we consider functions $[0,1]\to [0,1]$ that satisfy condition \textbf{I}. 
The following result, which follows from the preceding discussion and Theorem \ref{thm:mainthm}, provides a characterisation of moment sequences of functions $[0,1]\to [0,1]$ that satisfy condition \textbf{I}.
\begin{proposition}\label{prop:Imomentchar}
 Let $(a_k)_{k=0}^{\infty}$ be a bounded non--negative sequence. The following are equivalent.
\begin{enumerate}[(i)]
 \item $(a_k)_{k=0}^{\infty}$ is the moment sequence of some measurable function $f:[0,1]\to [0,1]$ satisfying condition \textbf{I}.
 \item There exists a sequence $(G_n)_{n=0}^{\infty}$ of tournaments such that $v(G_n)\to \infty$ and $t(\sS_{0,k},G_n)\to a_k$ as $n\to \infty$ for any $k\geq 0$.
 \item There exists a tournament limit $\Gamma$ such that $t(\sS_{0,k},\Gamma)=a_k$ for any $k\geq 0$.
 \item There exists a tournament kernel $W$ such that $\int_0^1 \left(\int_{0}^1 W(x,y) \d y\right)^k \d x = a_k$ for any $k\geq 0$.
\end{enumerate}
\end{proposition}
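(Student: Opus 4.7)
The plan is to deduce this proposition as essentially a formal consequence of Theorem \ref{thm:mainthm}, together with the standard correspondence between tournament kernels, tournament limits, and convergent sequences of tournaments. The main work has already been done; one only has to shuttle between the four formulations. I would organise the argument as a cycle (i) $\Rightarrow$ (iv) $\Rightarrow$ (iii) $\Rightarrow$ (ii) $\Rightarrow$ (i).

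The only substantive step is (i) $\Leftrightarrow$ (iv), which is where Theorem \ref{thm:mainthm} enters. For (i) $\Rightarrow$ (iv), given $f$ satisfying \textbf{I} with moments $a_k$, the implication \eqref{list:3} $\Rightarrow$ \eqref{list:2} of Theorem \ref{thm:mainthm} produces a tournament kernel $W$ whose score function is $f$; by Fubini
\begin{align}
\int_0^1 \left( \int_0^1 W(x,y)\d y \right)^{k} \d x = \int_0^1 f(x)^k \d x = a_k.
\end{align}
Conversely, starting from a kernel $W$ as in (iv) with score function $f$, the already-established implication \eqref{list:2} $\Rightarrow$ \eqref{list:3} of Theorem \ref{thm:mainthm} tells us that $f$ satisfies \textbf{I}, while the moments of $f$ are precisely the $a_k$.

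The remaining implications are routine. For (iv) $\Rightarrow$ (iii), the kernel $W$ represents some tournament limit $\Gamma$, which by definition means $t(F,\Gamma)=t(F,W)$ for every digraph $F$; specialising to $F=\sS_{0,k}$ gives $t(\sS_{0,k},\Gamma)=\int_0^1 f(x)^k\d x=a_k$. For (iii) $\Rightarrow$ (ii) one simply takes any sequence of tournaments $(G_n)$ with $v(G_n)\to\infty$ converging to $\Gamma$ (which exists by the definition of $\widehat{\cT}$) and invokes continuity of $t(\sS_{0,k},\cdot)$. For (ii) $\Rightarrow$ (i), compactness of $\overline{\cT}$ lets one pass to a subsequence of $(G_n)$ converging to some element of $\overline{\cT}$; since $v(G_n)\to\infty$, that limit must lie in $\widehat{\cT}$, i.e. is a tournament limit $\Gamma$ with $t(\sS_{0,k},\Gamma)=a_k$, and any kernel representing $\Gamma$ has a score function $f$ with moments $a_k$ satisfying \textbf{I}.

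I do not anticipate any real obstacle: the entire argument is a bookkeeping exercise once Theorem \ref{thm:mainthm} is available. The only point that warrants care is the compactness step in (ii) $\Rightarrow$ (i), where one must note that the condition $v(G_n)\to\infty$ forces the subsequential limit in $\overline{\cT}$ to be a genuine tournament limit rather than the image under $\tau$ of some finite tournament.
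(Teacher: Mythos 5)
Your proof is correct and follows exactly the route the paper implicitly intends (the paper simply asserts the proposition ``follows from the preceding discussion and Theorem~\ref{thm:mainthm}'' without spelling it out): the pivot is the equivalence (i) $\Leftrightarrow$ (iv) via the score-function characterisation of Theorem~\ref{thm:mainthm}, and the remaining implications are the standard kernel/limit/compactness bookkeeping together with the identity $t(\sS_{0,k},\Gamma)=\int_0^1 f(x)^k\,\mathrm{d}x$. No gaps.
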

A natural question is whether these moment sequences can be characterised purely algebraically. That is, can one add additional (algebraic) conditions to items \ref{prop:momentchar1} and \ref{prop:momentchar2} in Proposition \ref{prop:momentchar} to make these equivalent to the statements in Proposition \ref{prop:Imomentchar}?
Applying the results of  \cite{Hurlimann2015} would give such an algebraic condition, essentially saying that a sequence $(a_k)_{k=0}^{\infty}$ satisfies any of the conditions in Proposition \ref{prop:Imomentchar} if and only if the zeroes of a certain polynomial determined by $(a_k)_{k=1}^{\infty}$ satisfies a  inequality similar to that in Theorem \ref{thm:landaumoon}. This however is not particularly illuminating and we refrain from pursuing this matter further here.

\section{Results on the uniqueness} \label{sec:uniqueness}

We turn now to the question of uniqueness and completing the proof of Theorem \ref{thm:unique}. 

 \begin{remark}
 For finite tournaments, one can change the orientation of any 3--cycle without changing the score sequence, although this would typically change the isomorphism type of the tournament. (In particular, given any two tournaments with the same score sequence, one can transform one into the other by successively reversing the orientation of $3$--cycles.) The idea of the proof of Theorem \ref{thm:unique} is precisely to reverse the orientation of $3$--cycles in $W$, taking care not to change the score function but still changing the kernel to another, non--equivalent kernel.
\end{remark}

\begin{proof}[Proof of Theorem \ref{thm:unique} \eqref{un2} $\Rightarrow$ \eqref{un6}.]
 We prove the contrapositive statement. Suppose $t(\sC_3,\Gamma)>0$. We claim there is a tournament limit $\Gamma_0\neq \Gamma$ such that $\nu(\Gamma)=\nu(\Gamma_0)$. 

Let $W$ be any representative of $\Gamma$. There must exist some $0<\delta < 1$ such that the set
\begin{align}
 A = \{(x,y,z)\in [0,1]^3 \ : \ \delta < W(x,y),W(y,z),W(z,x)<1-\delta \}
\end{align}
satisfies $\Leb(A)>0$. Otherwise $t(\sC_3,W)=0$, a contradiction. Moreover, there exist disjoint intervals $A_1,A_2,A_3\subseteq [0,1]$ such that $\Leb\left((A_1\times A_2 \times A_3) \cap A  \right)>0$. We may assume $\Leb(A_1),\Leb(A_2),\Leb(A_3)<\delta$. Let $B=(A_1\times A_2\times A_3)\cap A$.

We define a family of tournament kernel $W_s:[0,1]^2\to [0,1]$, $s\in [0,1]$ as follows. For any $(x,y,z)\in B$, define
\begin{align}
 W_s(x,y) & :=  W(x,y)+t\cdot \Leb\{ z\in A_3 \ : \ (x,y,z)\in B \} \\
 W_s(y,z) & :=  W(y,z)+t\cdot \Leb\{ x\in A_1 \ : \ (x,y,z)\in B \} \\
 W_s(z,x) & :=  W(z,x)+t\cdot \Leb\{ y\in A_2 \ : \ (x,y,z)\in B \}
\end{align}
and
\begin{align}
 W_s(y,x) & :=  W(y,x)-t\cdot \Leb\{ z\in A_3 \ : \ (x,y,z)\in B \} \\
 W_s(z,y) & :=  W(z,y)-t\cdot \Leb\{ x\in A_1 \ : \ (x,y,z)\in B \} \\
 W_s(x,z) & :=  W(x,z)-t\cdot \Leb\{ y\in A_2 \ : \ (x,y,z)\in B \}
\end{align}
and define $W_s(x,y)=W(x,y)$ for any point $(x,y)\in [0,1]$ where $W_t$ is not yet defined. The disjointedness of $A_1,A_2,A_3$ ensures that the construction is well--defined. This defines a tournament kernel $W_s:[0,1]^2 \to [0,1]$.

Denote by $f$ and $f_s$ the score functions of $W$ and $W_s$, respectively. We claim that $f=f_s$ for all $s\in [0,1]$. Fix some $x\in [0,1]$. If $x\in A_1$, then, by Fubini's theorem,
\begin{align}
 (f-f_t)(x) & =\int_0^1 (W-W_t)(x,y)\d y \\
&= t \int_{A_2} \Leb\{z\in A_3 \ : \ (x,y,z)\in B \} \d y - t \int_{A_3} \Leb\{y\in A_2 \ : \ (x,y,z)\in B \} \d z  \\
& = t\cdot \Leb\{ (y,z)\in A_2\times A_3 \ : \ (x,y,z)\in B\} - t\cdot  \Leb\{ (y,z)\in A_2\times A_3 \ : \ (x,y,z)\in B\} \\
& = 0.
\end{align}
The other cases with $x\in A_2$ and $x\in A_3$ are similar. If $x\in [0,1]\setminus (A_1\cup A_2\cup A_3)$, then clearly $f_s(x)=f(x)$. Hence $f_s=f$, so $W_s$ and $W$ have the same score functions.

Finally we show that we can choose $s\in [0,1]$ such that $W$ and $W_s$ are not equivalent as kernels. We do this by showing that there is some $s\in [0,1]$ for which
\begin{align}
 t(\sC_4,W)\neq t(\sC_4,W_s).
\end{align}

\begin{figure}[ht]
 \centering
\includegraphics{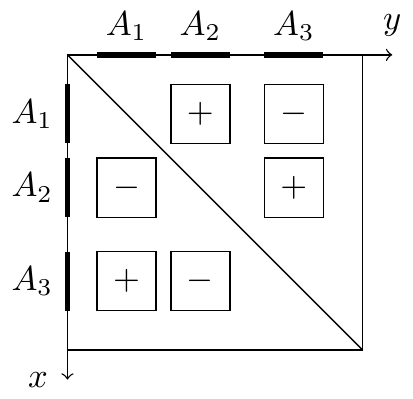}
\caption{An illustration of the construction of $W_{t}$. The 2--dimensional projections of $B$ lie inside the small squares $A_i\times A_j$, $i\neq j$ (but need not fill them up, so the picture is somewhat misleading) and the sign highlights the difference between $W_t$ and $W$.}
\label{fig:epskernel}
\end{figure}

When computing $t(\sC_4,W_t)$, we integrate the function
\begin{align}
 g_s(x_1,x_2,x_3,x_4)=W_s(x_1,x_2)W_s(x_2,x_3)W_s(x_3,x_4)W_s(x_4,x_1)
\end{align}
over $[0,1]^4$. Each such product is a polynomial in $s$ (of degree at most 4)  with constant term equal to
$ W(x_1,x_2)W(x_2,x_3)W(x_3,x_4)W(x_4,x_1)$. This implies that $t(\sC_4,W_s)$ is a polynomial in $s$, of degree at most $4$, with constant term equal to $t(\sC_4,W)$. 

We claim that the coefficient of $s^4$ in $g_s$ must be non--negative. To see this, recall that $W_s$ is equal to $W$ except possibly inside the small squares shown in Figure \ref{fig:epskernel}. If $g_s(x_1,x_2,x_3,x_4)$ is to be of degree $4$, then necessarily $x_1,x_2,x_3,x_4\in A_1 \cup A_2 \cup A_3$. It is straightforward to check all cases and see that the coefficient in front of $s^4$ must be non--negative. Since we have changed $W$ on a non--null set, $g_s(x_1,x_2,x_3,x_4)$ is of degree $4$ on a non--null set of $[0,1]^4$. Therefore
\begin{align}
 t(\sC_4,W_s) = a_4s^4 + a_3s^3+a_2s^2+a_1s + t(\sC_4,W),
\end{align}
where $a_4>0$. We can therefore choose $s_0\in [0,1]$ so that $t(\sC_4,W_{s_0})\neq t(\sC_4,W)$. 

Now let $\Gamma_0$ be the tournament limit represented by $W_{s_0}$. Since $f=f_{s_0}$, we have $\nu(\Gamma)=\nu(\Gamma_0)$. But $\Gamma\neq \Gamma_0$ since $t(\sC_4,\Gamma)\neq t(\sC_4,\Gamma_0)$. This completes the proof.
\end{proof}

%
%
%
%
%


It is also proved in \cite{Thornblad2016b} that $W$ represents the transitive tournament limit if and only if $\E\left[\left(\int_0^1W(X,y)\d y\right)^2\right]=1/3$, where $X\sim U[0,1]$. Therefore the transitive tournament limit is determined by the first two moments of its outdegree distribution.

\section{Results on self--converse limits}\label{sec:selfconv}

\begin{proof}[Proof of Lemma \ref{lem:selfconv}]
\eqref{selfconv1} $\Leftrightarrow$ \eqref{selfconv2}. This is the definition.

\eqref{selfconv5} $\Leftrightarrow$ \eqref{selfconv2}. This follows by the fact that $t(\cdot, G)$ and $t_{ind}(\cdot, G)$ are related via linear combinations, so the homomorphism density of any digraph can be expressed as a linear combination of induced homomorphism densities of digraphs, which is a linear combination of induced homomorphism densities of tournaments.

\eqref{selfconv2} $ \Rightarrow $ \eqref{selfconv3}. Immediate since $t_{ind}(F,G')=t_{ind}(F',G)$ holds for any tournament $G$; now $G$ and $G'$ are isomorphic, so $t_{ind}(F,G')=t_{ind}(F,G)$.

\eqref{selfconv3} $ \Rightarrow $ \eqref{selfconv2}. Take $F=G'$. Then $t_{ind}(G',G)=t_{ind}(G,G)>0$, so $G$ contains a copy of $G'$. Since $v(G)=v(G')$, this implies that $G$ and $G'$ are isomorphic, so $G$ is self--converse.

\eqref{selfconv2} $ \Rightarrow $ \eqref{selfconv4}. The bijection observing the isomorphism between $G$ and $G'$ will satisfy this.

\eqref{selfconv4} $ \Rightarrow $ \eqref{selfconv2}. The bijection $\rho$ will be the required isomorphism between $G$ and $G'$.
\end{proof}

\begin{proof}[Proof of Proposition \ref{prop:selfconverse}]
Let $\sigma:[0,1]\to [0,1]$ be a measure--preserving transformation such that $\sigma^2(x)=x$ for almost every $x\in [0,1]$ and $W(x,y)=W(\sigma(y),\sigma(x))$ for almost every $(x,y)\in [0,1]^2$. Let $X_1,\dots, X_n \sim U[0,1]$ be mutually independent and identically distributed. We construct a sequence of self--converse (random) tournaments $H(2n,W)$ with vertex set $\{v_1,\dots, v_n,w_1,\dots, w_n\}$ as follows.

\begin{itemize}
 \item For $1\leq i<j \leq n$, include the edge $v_iv_j$ with
probability $W(X_i,X_j)$; otherwise include the edge $v_jv_i$. The induced random subtournament on $\{v_1,\dots, v_n\}$ is distributed like $G(n,W)$. 
\item For $1\leq i<j \leq n$, include the edge $w_iw_j$ if and only if the edge $v_jv_i$ was included in the previous step. Note that the probability that the edge $w_iw_j$ is included is precisely
$W(X_j,X_i) = W(\sigma(X_i),\sigma(X_j))$. The induced random subtournament on $\{w_1,\dots, w_n\}$ is therefore distributed like $G(n,W)$. 
\item For any $1\leq i \leq j \leq n$ include the edge $v_iw_j$ with probability $W(X_i,\sigma(X_j)$. For any $1\leq j < i \leq n$, include the edge $v_jw_i$ if and only if the edge $v_iw_j$ was included. Then $v_jw_i$ is included with probability  $W(X_i,\sigma(X_j))=W(X_j,\sigma(X_i))$.
\end{itemize}

The edges of $H(2n,W)$ are only ``weakly dependent'', in the sense that all pairs of distinct edges $(v_i,v_j)$, $(v_k,v_{\ell})$  and all pairs $(w_i,w_j)$, $(w_k,w_{\ell})$  are independent, while edges $(v_i,w_j)$ and $(v_k,w_{\ell})$ are dependent if and only if $i=\ell$ and $j=k$. When sampling $k$ vertices, the (orientation of the) edges induced by these $k$ vertices are mutually independent with probability tending to $1$ as $n\to \infty$. (It is enough to select vertices with different indices.) Moreover, if we sample $k$ vertices with ``independent edges'', then the induced subgraph will be distributed like $G(k,W)$. Denote by $H(2n,W)[k]$ the random induced subtournament of $H(2n,W)$ obtained by sampling $k$ vertices (without replacement). Then, for any tournament $F$ on $k$ vertices,
\begin{align}
 t_{ind}(F,H(2n,W))=\mathbb{P}[F=H(2n,W)[k]] \to \mathbb{P}[F=G(k,W)]=t_{ind}(F,W)
\end{align}
as $n\to \infty$. By \cite{DiaconisJanson}, this implies that $H(2n,W)\to W$ with probability $1$. Since every realisation of $H(2n,W)$ is self--converse, we can extract a sequence of self--converse tournaments converging to $W$.
\end{proof}

\begin{proof}[Proof of Lemma \ref{lem:Wselfconv}]
\eqref{Wselfconv4} $\Rightarrow$ \eqref{Wselfconv2}. 
Let $W_0(x,y) = W(\sigma_1(x),\sigma_1(y))$. Then 
\begin{align}
W_0'(x,y)=W_0(y,x)=W(\sigma_1(y),\sigma_1(x))=1-W(\sigma_2(y),\sigma_2(x))=W(\sigma_2(x),\sigma_2(y)).
\end{align}
 Hence both $W_0$ and $W_0'$ are pull--backs (via $\sigma_1$ and $\sigma_2$ respectively) of $W$, so $W_0$ and $W_0'$ are equivalent (two pullbacks from the same kernel must have the same homomorphism densities, hence be equivalent). Therefore $W_0$ is self--converse. But $W_0$ is equivalent to $W$, so $W$ must also be self--converse. (Another way to see this is that $W_0'(x,y)=W'(\sigma_1(x),\sigma_1(y))$ by the above, so $W'$ is equivalent to $W_0'$, which is equivalent to $W_0$, which is equivalent to $W$.)

\eqref{Wselfconv2} $\Rightarrow$ \eqref{Wselfconv4}. 
Suppose $W$ and $W'$ are equivalent and that there exists  measure--preserving transformations $\sigma_1,\sigma_2 : [0,1]\to [0,1]$ such that $W(\sigma_1(x),\sigma_1(y))=W'(\sigma_2(x),\sigma_2(y))$ almost everywhere. Then  
\begin{align}
 W(\sigma_1(x),\sigma_1(y)) + W(\sigma_2(x),\sigma_2(y)) &= W(\sigma_1(x),\sigma_1(y)) + W'(\sigma_2(y),\sigma_2(x)) \\ &= W(\sigma_1(x),\sigma_1(y))+W(\sigma_1(y),\sigma_1(x))\\ &=1
\end{align}
almost everywhere.

\end{proof}

\begin{proof}[Proof of Theorem \ref{thm:conv}]
 \eqref{thm:convii} $\Leftrightarrow$ \eqref{thm:convi}: This follows by Theorem \ref{thm:defsensible} and the proof of Theorem \ref{thm:mainthm}.
 
 \eqref{thm:convi} $\Longrightarrow$ \eqref{thm:conviii}: Since $\Gamma$ is self--converse and $\sS_{n,0}'=\sS_{0,n}$, we have
\begin{align}
\int_0^1 f(x)^n \d x =  t(\sS_{n,0},\Gamma) = t(\sS_{0,n},\Gamma)=\int_0^1 (1-f(x))^n \d x.
\end{align}
so $f$ and $1-f$ have the same moments, and hence the same decreasing rearrangement. But $f$ is non--decreasing, so its decreasing rearrangement is $f(1-x)$. Since $1-f$ is non--increasing, it is equal to its decreasing rearrangment, so $f(1-x)=1-f(x)$ for almost all $x\in [0,1]$, as desired.

\eqref{thm:conviii} $\Longrightarrow$ \eqref{thm:convi}:  Fix $f$. Take the sequence $(f_n)_{n=1}^{\infty}$ as in Lemma \ref{lem:score}. The associated score sequence $(d_i)_{i=1}^n$ (as in Lemma \ref{lem:score}) satisfies $d_i+d_{n+1-i}=n-1$ for $i=1,2,\dots n$. By Theorem \ref{thm:Eplett}, there is a generalised self--converse tournament $G_n$ with score sequence $(d_i)_{i=1}^n$. By compactness, there is a subsequence of $G_n$ converging to some tournament limit $\Gamma$. For any finite digraph $F$,
\begin{align}
 t(F,\Gamma)=\lim_{n\to \infty}t(F,G_n) = \lim_{n\to \infty}t(F,G_n') = \lim_{n\to \infty}t(F',G_n) = t(F',\Gamma)
\end{align}
which implies that $\Gamma$ is self--converse. Similar to Theorem \ref{thm:mainthm}, we have $\nu^{+}(\Gamma)=\Leb(f^{-1})$.
\end{proof}

\begin{proof}[Proof of Proposition \ref{prop:in=out}]
Suppose $\Gamma$ is self--converse. Then, since $\sS_{n,0}'=\sS_{0,n}$, we have $t(\sS_{n,0},\Gamma)=t(\sS_{0,n},\Gamma)$. Therefore the moments of its indegree and outdegree distributions are equal, so $\nu^+(\Gamma)=\nu^{-}(\Gamma)$.

Conversely, suppose $\nu^+ = \nu^-$. We know that $\nu^+ = \Leb(f^{-1})$ for some $f:[0,1]\to [0,1]$. Similarly one can show that the indegree distribution must be of the form $\nu^- = \Leb((1-f)^{-1})$ for the same $f:[0,1]\to [0,1]$. We may assume that $f$ be non--decreasing (changing to its non--decreasing rearrangment, if necessary). Since $\nu^+ = \nu^-$, we have that $f$ and $1-f$ have the same moments. Hence they have the same non--decreasing rearrangement. This implies $f(x)=1-f(1-x))$ for all $x\in [0,1]$. It follows by Theorem \ref{thm:conv} that there is a self--converse tournament limit $\Gamma$ with $\nu(\Gamma)=\nu$.
\end{proof}

\section*{Acknowledgements}
The author is indebted to Svante Janson, in particular for finding the construction of $W_t$ in the proof of Theorem \ref{thm:unique}.

\bibliographystyle{abbrv}

\end{document}